\title[Scaling Limits for Exponential Hedging]{Scaling Limits for Exponential Hedging in the Brownian Framework}
\author{Yan Dolinksy} 
\address{Department of Statistics, Hebrew University}
\email{yan.dolinsky@mail.huji.ac.il}
\author{Xin Zhang} 
\address{Department of Finance and Risk Engineering, New York University}
\email{xz1662@nyu.edu}
\date{\today}
\numberwithin{equation}{section}  
\newtheorem{defn}{Definition}[section]
\newtheorem{rem}[defn]{Remark}
\newtheorem{thm}[defn]{Theorem}
\newtheorem{prop}[defn]{Proposition}
\newtheorem{lem}[defn]{Lemma}
\newtheorem{asm}[defn]{Assumption}
\DeclareMathOperator{\Tr}{Tr}
\newcommand{\R}{\mathbb R}
\begin{document}

\begin{abstract}
In this paper, we consider scaling limits of exponential utility indifference prices for European contingent claims in the Bachelier model.
We show that the scaling limit can be represented in terms of the \emph{specific relative entropy}, and in addition we construct asymptotically optimal hedging strategies. To prove the upper bound for the limit, we formulate the dual problem as a stochastic control, and show there exists a classical solution to its 
{Hamilton-Jacobi-Bellman} (HJB) equation. The proof for the lower bound relies on the duality result for exponential hedging in discrete time. 

\end{abstract}

\keywords{Exponential hedging, asymptotic analysis, specific relative entropy.}
\thanks{Y. Dolinsky is partially supported by the ISF grant 305/25. X. Zhang is partially supported by the NSF grant DMS-2508556.}

\maketitle

\section{Introduction}

Taking into account market frictions is an important challenge in financial modeling.
In this paper, we focus on the friction that the
rebalancing of the portfolio strategy is limited to occur discretely.
In such a realistic
situation, a general future payoff cannot be hedged perfectly even in complete market models
such as the Bachelier model or the Black--Scholes model. 

In the present study, 
we follow the well known approach of utility indifference pricing which is commonly used in
the setup of incomplete markets (see \cite{C} and the references therein).
Generally speaking, this approach says that the
price of a given contingent claim should be equal to the minimal amount of money that an investor has to
be offered so that she becomes indifferent (in terms of utility) between the situation where she has sold the
claim and the one where she has not.

More precisely, we consider the multi-dimensional Bachelier model, 
for the setup where the investor can hedge on
an equidistant set of times.
For a given vanilla European contingent claim, under some regularity assumptions, 
we provide the asymptotic behavior of the exponential utility indifference prices
where the risk aversion goes to infinity linearly
in the number of trading times.
In Theorem~\ref{thm.1}, we show that the limit of the exponential utility indifference prices can be represented in terms of the so-called {\em{specific relative entropy}} between martingales and Brownian motion. As the major component of the argument, in Theorem~\ref{thm.2} we construct asymptotically optimal hedging strategies for the corresponding utility maximization problem.  

Introduced by Gantert \cite{gantert1991einige}, as a scaling limit of relative entropy, the specific relative entropy serves as the rate function of a large deviation principle of Brownian paths. Its explicit formula has been derived in \cite{BB24,BaUn23,BCH24}. Moreover, it has found applications in stochastic optimization \cite{BB,BWZ24,Guo23} and volatility surface calibration \cite{Av01}. It is well known (see, for instance, Chapter 3 in \cite{F}), that  relative entropy appears in the dual representation of the exponential utility maximization problem. Therefore the specific relative entropy naturally arises in the asymptotic limit of exponential utility indifference prices. This has been observed in \cite{CD} for the one dimensional case. 

The present work is a natural continuation of \cite{CD}.
While the paper \cite{CD} considered European contingent claims with path dependent payoffs, the proof of the upper bound was not 
complete. The main tool in \cite{CD} was duality theory.
Hence, due to technical reasons, in the proof of the upper bound, to guarantee the compactness of the pre-limit, the paper
imposes an artificial term in discrete-time hedging. The goal of this paper
is to remove this extra term in the limit theorem and construct optimal
hedging strategies in the case of vanilla options. Let us emphasize that \cite{CD} does not provide any results for optimal trading strategies. 
 
In order to prove the upper bound of the limit, we formulate the dual problem as a continuous-time stochastic control. We prove that under some assumptions on the terminal payoff function, the HJB equation of this control problem is uniformly elliptic, and thus has a classical solution. It is based on the observation that the value function of control problem inherits the concavity of the terminal payoff, and thus the optimal control is uniformly bounded away from zero and from above. Using the regularity of the value function, we construct an asymptotic optimal strategy for the primal hedging problem in the same flavor as \cite{expert_4,pde_approach}, and hence obtain the upper bound.  To show the lower bound, we first prove by a density argument that it is sufficient to restrict the optimal control of the dual problem to be piecewise deterministic controls. Then we conclude the result by invoking the discrete-time duality theorem. This is the multidimensional extension of the lower bound from \cite{CD}.

The rest of the paper is organized as follows. In Section~\ref{sec:2}, we introduce the model and the main results Theorem~\ref{thm.1} and Theorem~\ref{thm.2}.
In Section ~\ref{sec:3} we prove Theorem~\ref{thm.2}. In Section ~\ref{sec:4}
we prove Theorem~\ref{thm.1}.

\section{Preliminaries and Main Results}\label{sec:2}
Let $T=1$ be the time horizon and let
$W=\left(W^1_t,...,W^d_t\right)_{t \in [0,1]}$
be a standard $d$-dimensional Brownian motion defined on a filtered probability space
$(\Omega, \mathcal{F},(\mathcal F_t)_{0\leq t\leq 1}, \mathbb P)$
where the filtration $(\mathcal F_t)_{0\leq t\leq 1}$ satisfies the
usual conditions (right continuity and completeness).
We consider a simple financial market
with a riskless savings account bearing zero interest (for simplicity) and with $d$-risky
asset $S=\left(S^1_t,...,S^d_t\right)_{t \in [0,1]}$ with Bachelier price dynamics
\begin{equation*}\label{model}
S_t=S_0+b t+ W_t, \ \ t\in [0,1] 
\end{equation*}
where $S_0\in \mathbb R^d$ is the initial position of the risky assets and 
$b \in \mathbb R^d$ is the constant drift. 

Fix $n\in\mathbb N$ and consider an investor who can trade the risky asset
only at times from the grid $\left\{0,1/n,2/n,...,1\right\}$.
Namely, the set $\mathcal {A}^n$ of trading strategies for the $n$-step model is the set of all processes 
$\gamma=\{\gamma_k=(\gamma^1_k,...,\gamma^d_k)\}_{0\leq k\leq n-1}$
such that for any $k$, $\gamma_k$ are $\mathcal F_{k/n}$-measurable.
The corresponding portfolio value at the maturity date is 
\begin{equation*}\label{portfolio}
V^{\gamma}_{1}:=\sum_{i=0}^{n-1}\left\langle \gamma_i ,S_{(i+1)/n}-S_{i/n}\right\rangle.
\end{equation*}
As usual,
$\langle \cdot,\cdot\rangle$ 
denotes the
standard scalar product in Euclidean space $\R^d$.

Let $f:\mathbb R^d\rightarrow\mathbb R$ and consider a (vanilla) European contingent claim with the payoff
$f(S_1)$. The investor will assess the quality of a hedge by the resulting expected utility. 
For the $n$-step model we assume 
exponential utility with constant absolute risk aversion
equal to $n$. 

The corresponding \textit{certainty equivalent} 
does not depend on the investor's initial wealth
and is given by (for details see Chapter 2 in \cite{C})
\begin{equation*}\label{2.2}
c_n:=\frac{1}{n}\log\left(\inf_{\gamma\in\mathcal A^n}\mathbb E_{\mathbb P}\left[\exp\left(n\left(f(S_1)-V^{\gamma}_1\right)\right)\right]\right)
\end{equation*}
where $\mathbb E_{\mathbb P}$ denotes the expectation with respect to $\mathbb P$.
{ Economically speaking, the term $c_n$ is the amount of certain wealth that makes the investor to be indifferent between 
the following: (i) Selling the option and hedging it. (ii) Doing nothing. Namely, $c_n$ is the unique 
solution of the equation 
$$\inf_{\gamma\in\mathcal A^n}\mathbb E_{\mathbb P}\left[\exp\left(n\left(f(S_1)-V^{\gamma}_1-c_n\right)\right)\right]=\exp(n0)=1.$$}
\begin{rem}
Let us notice that from the scaling of property of Brownian motion, there is no loss of generality assuming 
that the maturity date is $T=1$. In addition, by modifying the payoff function $f$ and taking a linear transformation 
of the stock price vector, there is no loss of generality to assume that the (constant) volatility matrix 
of the Bachelier model is the identity matrix. Moreover, by modifying the payoff function $f$ we can 
generalize our setup to the case where the risk aversion in the $n$-step model is equal to $\ell n$ for some 
constant $\ell>0$.
\end{rem}

The first main result 
provides the scaling limit for the \textit{certainty equivalent} $c_n$, $n\in\mathbb N$,
under the following assumptions on $f$.  Let $\mathcal M_d$ be the set of all $d\times d$ real matrices, let $\mathcal S_d\subset \mathcal M_d$ be the set of all positive semi-definite matrices, and $I_d$ denote the identity matrix. We denote by $||\cdot||$
the spectral norm  on the space $\mathcal M_d$. For any $K>0 $,  denote $B_K:=\{x \in \R^d: \, |x| \leq K \}$ and  $B_K^c:= \{ x \in \R^d: \, |x| >K\}$
(as usual $|\cdot|$ denotes the Euclidean norm on $\mathbb R^d$). 
\begin{asm}\label{asm1}
The payoff function $f:\mathbb R^d\rightarrow\mathbb R$ is uniformly bounded and lower semi-continuous. There exists $M>0$ such that 
$f\in C^2(B^c_M;\R)$, and the corresponding gradient $\nabla f: B^c_M\rightarrow \mathbb R^d$ is uniformly bounded. Moreover, there exists a constant $\alpha>0$ such that $\nabla^2 f (x) \leq (1-\alpha) I_d$, $\forall \, x \in B_M^c$ (as usual we use the Loewner order for matrices). 
\end{asm}

\begin{thm}\label{thm.1}
Under Assumption \ref{asm1}, the \textit{certainty equivalent} of $f(S_1)$ has the scaling limit
\begin{equation}\label{2.3}
 \lim_{n\rightarrow\infty} c_n=
 \sup_{\Sigma \in\mathcal V}\mathbb E_{\mathbb P}\left[f\left(S_0+\int_{0}^1 \sqrt{\Sigma_u} \, dW_u\right)- \int_{0}^1 G\left(\Sigma_t\right)dt \right],
\end{equation}
where $\mathcal V$ denotes the class of all 
$\mathcal(\mathcal F_t)$-predictable processes $\Sigma=\{\Sigma_t\}_{t \in [0,1]}$  valued in $\mathcal S_d$ 
such that $\mathbb E_{\mathbb P}\left[\int_{0}^1 \Tr (\Sigma_t) \, dt\right]<\infty$ and
$$G(\Sigma):=\frac{1}{2}\left(\Tr(\Sigma)-d-\log\left(\det(\Sigma)\right)\right), \ \  \forall\, \Sigma\in\mathcal S_d.$$
\end{thm}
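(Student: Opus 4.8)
The plan is to establish the two inequalities in \eqref{2.3} separately, following the strategy outlined in the introduction. Denote the right-hand side of \eqref{2.3} by $J$. For the upper bound $\limsup_n c_n \le J$, the key is to exploit the dual description of exponential hedging. First I would pass to the dual: by the standard duality for exponential utility (see Chapter 3 in \cite{F}), $c_n$ equals a supremum over equivalent martingale measures $\mathbb Q$ for the discrete-time price process of $\mathbb E_{\mathbb Q}[f(S_1)] - \tfrac1n H(\mathbb Q\,|\,\mathbb P)$, where $H$ is relative entropy. One then reparametrizes the dual martingales via their predictable quadratic variation / diffusion coefficient: writing the density as a stochastic exponential $\mathcal E(\int \theta\,dW)$, the entropy becomes $\tfrac12\mathbb E[\int_0^1 |\theta_t|^2\,dt]$, and after a Girsanov change of measure the candidate limiting functional is exactly $\sup_{\Sigma\in\mathcal V}\mathbb E[f(S_0+\int_0^1\sqrt{\Sigma_u}\,dW_u) - \int_0^1 G(\Sigma_t)\,dt]$, since $G(\Sigma)$ is the Legendre-type penalty obtained by optimizing over drifts with the given diffusion $\Sigma$. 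The heart of the upper-bound argument is then Theorem~\ref{thm.2} together with the PDE input announced in the introduction: under Assumption~\ref{asm1} the associated HJB equation is uniformly elliptic and has a classical solution $v$, with the optimal feedback control $\Sigma^*(t,x)$ bounded away from $0$ and $\infty$ (this uses the propagation of concavity of $f$). Freezing $\Sigma^*$ over each mesh interval and using the regularity of $v$ in the spirit of \cite{expert_4,pde_approach}, one builds discrete hedging strategies $\gamma^n\in\mathcal A^n$ whose certainty equivalent converges to $J$; a Taylor expansion of $v$ along the grid, controlled by the uniform bounds on $\Sigma^*$ and $\nabla^2 v$, yields $c_n \le J + o(1)$.

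For the lower bound $\liminf_n c_n \ge J$, I would argue that it suffices to test against a dense subclass of $\mathcal V$ consisting of piecewise-deterministic (indeed step) controls $\Sigma$ adapted to the grid. Concretely, given $\varepsilon>0$ pick $\Sigma\in\mathcal V$ within $\varepsilon$ of the supremum $J$; by a density/approximation argument (mollification in time plus conditioning on the grid $\sigma$-algebras, using boundedness of $f$ and integrability of $\Tr\Sigma$ to pass limits) one may assume $\Sigma$ is of the form $\Sigma_t = \Sigma^{(k)}$ on $((k-1)/n, k/n]$ with $\Sigma^{(k)}$ being $\mathcal F_{(k-1)/n}$-measurable, at the cost of another $\varepsilon$. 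For such a control the continuous-time functional is, up to $o(1)$ error from replacing Brownian increments by their discrete analogues, a telescoping sum that matches the discrete-time dual problem of exponentially hedging $f(S_1)$ in the $n$-step Bachelier model. Invoking the discrete-time exponential-hedging duality theorem (the multidimensional extension of the corresponding step in \cite{CD}) gives $c_n \ge \mathbb E_{\mathbb P}[f(S_0+\int_0^1\sqrt{\Sigma_u}\,dW_u) - \int_0^1 G(\Sigma_t)\,dt] - o(1) \ge J - 2\varepsilon - o(1)$, and letting $n\to\infty$ then $\varepsilon\to0$ finishes this direction.

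Assembling the two bounds gives $\lim_n c_n = J$, which is \eqref{2.3}. I expect the main obstacle to be the upper bound, specifically two linked points: (i) establishing that the HJB equation is uniformly elliptic with a classical solution — one must show the value function of the stochastic control problem inherits the concavity coming from $\nabla^2 f \le (1-\alpha)I_d$ on $B_M^c$, so that the optimal $\Sigma^*$ stays in a compact subset of the interior of $\mathcal S_d$; here the interplay between the bounded/lsc behaviour of $f$ on $B_M$ and its concavity at infinity is delicate, and is exactly where Assumption~\ref{asm1} is used; and (ii) turning the smooth solution $v$ into a genuinely admissible discrete strategy with a controlled discretization error, where one must handle the martingale and finite-variation parts of the It\^o expansion of $v(k/n, S_{k/n})$ along the grid and show the error terms vanish uniformly in $n$. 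The lower bound is comparatively routine given the discrete-time duality, the only care being the density argument reducing $\mathcal V$ to grid-adapted step controls and the control of the Brownian-increment discretization.
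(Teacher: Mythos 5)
Your proposal is correct and follows essentially the same route as the paper: the upper bound is obtained from the explicit $2\epsilon$-optimal discrete strategies of Theorem~\ref{thm.2} (built from the classical solution of the uniformly elliptic HJB equation, whose ellipticity comes from propagating the concavity bound $\nabla^2 f\leq(1-\alpha)I_d$), and the lower bound from reducing the supremum over $\mathcal V$ to grid-adapted step controls bounded away from zero and then invoking the ``easy'' inequality of the discrete-time exponential-hedging duality (in the paper, the one-period Gaussian version of Lemma~\ref{lem4.1} iterated via dynamic programming, which also produces the vanishing $\tfrac{1}{2n}|b|^2$ drift correction). Your opening reparametrization of the discrete dual measures is only motivational and is not needed once Theorem~\ref{thm.2} is in hand, which is exactly how the paper proceeds.
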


\begin{rem}
The utility indifference price in the $n$-step model is given by (see \cite{C}) 
$$\pi_n:=\frac{1}{n}\log\left(\frac{\inf_{\gamma\in\mathcal A^n}\mathbb E_{\mathbb P}\left[\exp\left(n\left(f(S_1)-V^{\gamma}_1\right)\right)\right]}
{\inf_{\gamma\in\mathcal A^n}\mathbb E_{\mathbb P}\left[\exp\left(-nV^{\gamma}_1\right)\right]}\right).$$
Using similar arguments as in \cite{CD} { (see the details below formula (3.7) there)}
it follows that 
$$\lim_{n\rightarrow\infty}\frac{1}{n}\log\left(
\inf_{\gamma\in\mathcal A^n}\mathbb E_{\mathbb P}\left[\exp\left(-nV^{\gamma}_1\right)\right]\right)=0.$$ 
Hence, $\lim_{n\rightarrow\infty}\pi_n=\lim_{n\rightarrow\infty} c_n$ and we can 
focus on the limiting behavior of the 
the \textit{certainty equivalent}.
\end{rem}
\begin{rem}
Let $\Sigma\in\mathcal S_d$ and let $\lambda_1,...,\lambda_d \geq 0$ be the corresponding eigenvalues of $\Sigma$.
Then,
$$G(\Sigma)=\frac{1}{2}\sum_{i=1}^d (\lambda_i-\log\lambda_i-1)\geq 0$$
where the last equality follows from the fact that the function $\mathbb R_+ \ni x\rightarrow x-\log x-1$ is a nonnegative function with the minimum point $x=1$. 
Moreover, since $f$ is uniformly bounded, then for any $\Sigma \in\mathcal V$ the right hand side of (\ref{2.3}) is well defined.
\end{rem}
\begin{rem}
    Under some regularity assumptions on $\Sigma$, it can be shown that the term $\mathbb E_{\mathbb P}\left[ \int_0^1 G(\Sigma_t) \,dt \right]$ is exactly the specific relative entropy between the martingale $M_t:=\int_0^t \sqrt{\Sigma_s} dW_s$ and the standard Brownian motion $W$. See \cite{BB24} and\cite{BaUn23}  for more details. 
\end{rem}

Next, we define the following regularity class of functions.
\begin{defn}
Let 
$\mathcal{H}$ be the set of all functions $h\in C^{\infty}(\mathbb R^d;\R)$ 
such that the derivatives of all order (including order zero) are uniformly bounded.
Moreover, there exists a constant $\alpha>0$ such that $\nabla^2 h(x) \leq (1-\alpha)I_d$, $\forall \, x \in \mathbb R^d$.
\end{defn}

We arrive at the following result 
which provides asymptotically optimal hedging strategies via particular PDEs with regular terminal conditions. 

\begin{thm}\label{thm.2}
Under Assumption~\ref{asm1}, for any $\epsilon>0$
 there exists $h\in \mathcal{H}$ such that
\begin{equation}\label{density}
 \sup_{x\in\mathbb R^d}\left|h(x)-x^2/2-\left(f- | \cdot |^2/2 \right)^{cav}(x)\right|<\epsilon,
 \end{equation}
where $\left(f- |\cdot |^2/2 \right)^{cav}$ denotes the concave envelop of 
$ x\mapsto f(x)-\frac{1}{2}|x|^2$.

For such $h$, the Cauchy problem
\begin{equation}\label{cauchy}
\partial_t u=\frac{1}{2}\log\left(\det(I_d-\nabla^2 u)\right), \ \ u(1,x)=h(x)
\end{equation}
has a unique classical solution $u(t,x)\in C^{1,2}([0,1]\times\mathbb R^d)$ and
we have 
\begin{align}\label{optimap}
&\lim\sup_{n\rightarrow\infty}\frac{1}{n}
\log\mathbb E\left[\exp\left(n\left(f(S_1)-\sum_{i=0}^{n-1} \left\langle \nabla u\left(\frac{i+1}{n},S_{\frac{i}{n}}\right),
S_{\frac{i+1}{n}}-S_{\frac{i }{n}}\right\rangle\right)\right)\right]\nonumber\\
&\leq 2\epsilon+\sup_{\Sigma\in\mathcal V}\mathbb E_{\mathbb P}\left[f\left(S_0+\int_{0}^1 \sqrt\Sigma_u \, dW_u\right)- \int_{0}^1 G\left(\Sigma_t \right)dt \right].
\end{align}
Namely, 
the constructed hedging strategies $\gamma^n:=\left(\nabla u\left(\frac{i+1}{n},S_{\frac{i}{n}}\right)\right)_{0\leq i\leq n-1}$, $n\in\mathbb N$,
are asymptotically $2\epsilon$-optimal. 
\end{thm}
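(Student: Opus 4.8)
The plan is to proceed in three stages: first the approximation step producing $h$, then the PDE analysis for the Cauchy problem \eqref{cauchy}, and finally the probabilistic estimate establishing the asymptotic upper bound \eqref{optimap}.

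For the density statement \eqref{density}, I would argue as follows. Set $g(x):=f(x)-|x|^2/2$. Assumption~\ref{asm1} gives $\nabla^2 f(x)\le (1-\alpha)I_d$ on $B_M^c$, so $\nabla^2 g(x)\le -\alpha I_d$ there, i.e.\ $g$ is strongly concave outside a compact set; combined with boundedness of $f$, this forces $g$ (hence $g^{cav}$) to be finite, with $g^{cav}-|\cdot|^2/2$ bounded above and $g^{cav}$ genuinely concave with a uniform upper bound on $\nabla^2 g^{cav}$ of the form $-\alpha' I_d$ for some $\alpha'\in(0,\alpha]$ (the concave envelope cannot curve less than the function outside the compact region, and inside it is an affine-or-better interpolation). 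Then mollify: take $h_\delta(x) := (g^{cav}*\rho_\delta)(x) + |x|^2/2$ with $\rho_\delta$ a smooth compactly supported mollifier. Since $g^{cav}$ is concave and Lipschitz (its gradient is bounded because $g^{cav}-|\cdot|^2/2$ is bounded and concavity controls one-sided slopes), $g^{cav}*\rho_\delta \to g^{cav}$ uniformly, all derivatives are bounded, and $\nabla^2(g^{cav}*\rho_\delta) = \nabla^2 g^{cav} * \rho_\delta \le -\alpha' I_d$, so $\nabla^2 h_\delta \le (1-\alpha')I_d$. Thus $h_\delta\in\mathcal H$, and choosing $\delta$ small gives \eqref{density}.

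For the Cauchy problem, I would invoke the statement already announced in the paper's introduction: for $h\in\mathcal H$ the HJB/parabolic equation \eqref{cauchy} is uniformly parabolic — indeed $\nabla^2 u$ inherits the bound $\nabla^2 u \le (1-\alpha')I_d$ from the terminal data (the value function of the dual control problem is concave-preserving, as emphasized in the introduction), so $I_d-\nabla^2 u \ge \alpha' I_d$, keeping $\log\det(I_d-\nabla^2u)$ smooth and the equation uniformly elliptic, and from above $I_d - \nabla^2 u$ is bounded because $\nabla^2 u$ is bounded below (again by a maximum-principle/propagation-of-convexity argument in the other direction using boundedness of $h$ and its derivatives). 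Classical Schauder theory for uniformly parabolic fully nonlinear equations with smooth bounded data then yields a unique smooth solution $u$ with all relevant derivatives bounded on $[0,1]\times\R^d$. I will treat the detailed derivation of these a priori bounds as a separate lemma, since this is genuinely the technical heart.

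The final step is the estimate \eqref{optimap}, following the ``PDE verification'' approach of \cite{expert_4,pde_approach}. Define $\gamma^n_i := \nabla u((i+1)/n, S_{i/n})$ and consider the discrete process $D^n_k := u(k/n, S_{k/n}) - \sum_{i<k}\langle \gamma^n_i, S_{(i+1)/n}-S_{i/n}\rangle$, so that $f(S_1)-\sum_i\langle\gamma^n_i,\Delta S\rangle \le h(S_1)-\sum_i\langle\gamma^n_i,\Delta S\rangle + \|f-h\|_\infty \le D^n_n + \epsilon$ up to the approximation error from \eqref{density} (with the extra $|x|^2/2$ bookkeeping absorbed, giving the factor $2\epsilon$ in the end after also accounting for the gap between $h$ and $x^2/2+g^{cav}$). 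A Taylor expansion of $u((i+1)/n,S_{(i+1)/n})$ around $(i+1)/n,\,S_{i/n})$, conditioning on $\mathcal F_{i/n}$ and using $S_{(i+1)/n}-S_{i/n} = b/n + (W_{(i+1)/n}-W_{i/n})$, shows that $\E[\exp(n(D^n_{k+1}-D^n_k))\mid\mathcal F_{k/n}]$ equals, to leading order, $\exp(-\partial_t u + \tfrac12\log\det(I_d-\nabla^2 u) + o(1))\cdot(\text{Gaussian correction})$; the choice of $\gamma^n_i$ as $\nabla u$ precisely kills the linear-in-increment term, and the PDE \eqref{cauchy} is exactly what makes the deterministic exponent vanish. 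The uniform bounds on the derivatives of $u$ (from the previous step) control the remainder uniformly in $i$ and $n$. Telescoping, one obtains $\frac1n\log\E[\exp(n(D^n_n - u(0,S_0)))] \le o(1)$, hence $\limsup_n \frac1n\log\E[\exp(n(f(S_1)-\sum_i\langle\gamma^n_i,\Delta S\rangle))] \le u(0,S_0) + 2\epsilon$. It remains to identify $u(0,S_0)$ with the variational quantity on the right of \eqref{optimap} up to the approximation error: by the stochastic-control representation of \eqref{cauchy} — writing $-\tfrac12\log\det(I_d-\nabla^2 u) = \sup_{\Sigma\in\mathcal S_d}\{-\tfrac12\langle\Sigma,\nabla^2 u\rangle - G(\Sigma) + \tfrac12\Tr(\Sigma) - \tfrac{d}{2} + \ldots\}$, more precisely using the Legendre-type identity $\tfrac12\log\det(I_d - A) = \inf_{\Sigma\in\mathcal S_d}\{\tfrac12\langle\Sigma, A\rangle + G(\Sigma) + \text{const}\}$ appropriately normalized — one gets $u(0,S_0) = \sup_{\Sigma\in\mathcal V}\E[h(S_0+\int_0^1\sqrt{\Sigma_u}dW_u) - \int_0^1 G(\Sigma_t)dt]$, and replacing $h$ by $|\cdot|^2/2 + g^{cav}$ costs at most $\epsilon$. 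Combining with $\|f-h\|_\infty$ contributes the remaining $\epsilon$, yielding \eqref{optimap}.

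The main obstacle I anticipate is establishing the two-sided a priori bounds on $\nabla^2 u$ uniformly on $[0,1]\times\R^d$ — the upper bound $\nabla^2 u \le (1-\alpha')I_d$ (ensuring uniform parabolicity so that $\log\det$ stays well away from $-\infty$) and a lower bound on $\nabla^2 u$ (so that the diffusion coefficient $I_d - \nabla^2 u$ stays bounded). The upper bound should follow from a propagation-of-semiconcavity / comparison argument: differentiating the equation twice in space and applying the parabolic maximum principle to a suitable second-difference quotient, using that $\nabla^2 h \le (1-\alpha')I_d$. The lower bound is more delicate in the multidimensional setting because the nonlinearity $\log\det$ is not separately concave in the whole Hessian, but it is concave along the relevant directions once we know $I_d - \nabla^2 u \succ 0$; a fixed-point/continuity argument combined with interior Schauder estimates should close the loop. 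Once these bounds are in hand, the rest is a careful but routine Taylor-expansion bookkeeping.
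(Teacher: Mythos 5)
There are genuine gaps. The most serious one is in your construction of $h$. You assert that $g^{cav}$, with $g:=f-|\cdot|^2/2$, satisfies $\nabla^2 g^{cav}\le -\alpha' I_d$ uniformly, and then mollify with a compactly supported kernel. This premise is false: on the region where the concave envelope lies strictly above $g$, the envelope is affine along the segments that realize it (your own parenthetical ``affine-or-better interpolation'' concedes this), so $\nabla^2 g^{cav}$ has zero eigenvalues there --- already in $d=1$ for a truncated or barrier payoff the envelope contains genuine line segments. A compactly supported mollifier then yields $\nabla^2(g^{cav}\ast\rho_\delta)=0$ at interior points of such flat pieces, hence $\nabla^2 h_\delta=I_d$ there and $h_\delta\notin\mathcal H$; this destroys exactly the property ($I_d-\nabla^2 u\ge \alpha' I_d$) that makes \eqref{cauchy} uniformly parabolic. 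The repair needs two ingredients you skip: one must first prove that $g^{cav}\equiv g$ \emph{identically} outside a sufficiently large ball (this uses the boundedness of $f$ and $\nabla f$ quantitatively, not just qualitatively), and one must then mollify with a full-support (Gaussian) kernel, so that $\nabla^2(g^{cav}\ast\rho_\delta)(x)=\E\left[\nabla^2 g^{cav}(x+\delta Z)\right]\le -\alpha\,\mathbb P\left(x+\delta Z\in B^c_{M+\hat M}\right) I_d$, with the probability bounded away from zero uniformly in $x$. (Relatedly, $g^{cav}$ is not Lipschitz --- it behaves like $-|x|^2/2$ at infinity --- so uniform convergence of the mollification must be routed through the bounded, uniformly continuous function $g^{cav}+|\cdot|^2/2$ and a symmetric kernel.)

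Second, your final accounting does not close. The bound $f(S_1)\le h(S_1)+\|f-h\|_\infty$ is useless because $\|f-h\|_\infty$ is not small: $h$ is $\epsilon$-close to $\hat f:=|\cdot|^2/2+(f-|\cdot|^2/2)^{cav}\ge f$, and the concavification gap $\hat f-f$ can be of order one. The correct terminal estimate is $f\le\hat f\le h+\epsilon$ (one $\epsilon$), and your verification then delivers $u(0,S_0)\le\epsilon+\sup_{\Sigma\in\mathcal V}\E_{\mathbb P}\left[\hat f\left(S_0+\int_0^1\sqrt{\Sigma_u}\,dW_u\right)-\int_0^1 G(\Sigma_t)\,dt\right]$ --- but \eqref{optimap} has $f$, not $\hat f$, on its right-hand side, and since $\hat f\ge f$ this is the wrong direction. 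You still need the nontrivial identity that concavifying the terminal payoff does not change the value of the control problem (Lemma~\ref{lem3.2} in the paper), whose proof embeds, just before time $1$, the convex combination attaining the envelope at entropy cost of order $(1-t)\log(1-t)\to 0$; your proposal omits this step entirely, so as written it only proves \eqref{optimap} with the larger concavified payoff on the right. Finally, well-posedness and the two-sided Hessian bounds for \eqref{cauchy} are a stated conclusion of the theorem, and you defer them wholesale to ``a separate lemma''; the paper obtains them not by differentiating the PDE but through the control representation ($u^K+C|x|^2$ is a supremum of convex functions, and concavity in the other direction follows from an interchange-of-controls argument), which reduces matters to Krylov's classical solvability of the truncated, uniformly elliptic Bellman equation. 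Your Taylor-expansion verification step, by contrast, does match the paper's argument.
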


We end this section with the following four remarks.
{\begin{rem}
The above type of scaling limits goes back to the seminal work of Barles and Soner \cite{BS}, which determines the scaling limit of utility indifference prices of vanilla options for small proportional transaction costs and high risk aversion.
In general, the common ground between the work of Barles and Soner and the present paper is that both 
 start with complete markets and consider small frictions, which make the markets incomplete and the derivative securities cannot be perfectly hedged with a reasonable initial capital. Then, instead of considering perfect hedging, these papers study utility indifference prices with exponential utilities and with large risk aversion.
 We show that if the risk aversion goes to infinity linearly
in the number of trading times we get a meaningful scaling limit. If the risk aversion goes to infinity faster than linearly, the indifference prices will converge 
to the model free price which is equal to $f^{cav}(S_0)$ ($f^{cav}$ denotes the concave envelop of $f$). If the risk aversion goes to infinity slower than linearly 
the indifference prices will converge to $\mathbb E_{\mathbb P}\left [f(S_0+W_1)\right]$ which is the unique price of the continuous time complete 
Bachelier model (for more details see Remark 2.3 in \cite{CD}).
\end{rem}}
\begin{rem}
As we will see in Section \ref{sec:3}, Assumption \ref{asm1} will be essential for the proof of the upper bound and
the construction of asymptotically optimal hedging strategies in Theorem~\ref{thm.2}. 
From mathematical finance point of view, the Bachelier model can be applied for the case where the
maturity date is small. Since, we scale the maturity date to $T=1$, it means that the components of the initial stock price should be 
large positive numbers. In this case, with large probability the stock price process remains non-negative.

Thus, two major type of payoffs which satisfy our assumption are put options given by
$$f(S_1):=\left(K-\sum_{i=1}^d a_i |S^i_1|\right)^{+}, \ \ K,a_1,...,a_d\geq 0$$
and truncated call options given by 
$$f(S_1):=\min\left(K_1,\left(\sum_{i=1}^d a_i |S^i_1|-K_2\right)^{+}\right), \ \ K_1,K_2,a_1,...,a_d\geq 0.$$
Moreover, let us observe that if Theorems \ref{thm.1}, \ref{thm.2}
hold true for $f(S_1)$ 
then they also hold true for $c_0+\sum_{i=1}^d c_i S^i_1+
f(S_1)$ for any constants $c_0,c_1,...,c_d\in\mathbb R$. Hence, Theorems \ref{thm.1}, \ref{thm.2}
hold true for the payoff 
$$f(S_1):=\sum_{i=1}^d a_i S^i_1-K+\left(K-\sum_{i=1}^d a_i |S^i_1|\right)^{+}, \ \ K,a_1,...,a_d\geq 0.$$
For the case where $S^i_1\geq 0$ for all $i=1,...,d$, the above right hand side is equal to the payoff of the call option 
with the payoff 
$\left(\sum_{i=1}^d a_i S^i_1-K\right)^{+}$.

Finally, we notice that barrier type of payoffs which have the form $f(S_1)=g(S_1)\mathbbm{1}_{|S_1|<K}$
where $K>0$ and $g:\mathbb R^d\rightarrow\mathbb R_{+}$ is a bounded lower semi-continuous function 
($\mathbbm{1}$ denotes the indicator function) also satisfy 
Assumption \ref{asm1}. 

\end{rem}

{
\begin{rem}
  It may be possible to establish the existence of classical solutions to \eqref{cauchy} via the continuity method, following the approach of \cite{DaSa12}, and to obtain third-order derivative estimates as in \cite{CNSI}, under weaker assumptions on $h$. In particular, one might be able to prove Proposition~\ref{2.2} under the conditions $h \in C^2(\mathbb R^d)$ and $\Lambda I_d \geq \nabla^2 h \geq \lambda I_d$ for some constants $\Lambda > \lambda>0$. If this can be achieved, the uniform boundedness condition in Assumption~\ref{asm1} could be removed, thereby allowing payoffs such as call options. We leave this for future investigation.
\end{rem}
}

\begin{rem}
Our proof of the lower bound (given in Section \ref{sec:4}) uses only the fact that $f$ is lower semi-continuous and has linear growth.
Let us 
notice that we can not drop the assumption that $f$ is lower semi-continuous. Indeed, take $d=1$, $S_0=\frac{1}{2}$, $b=0$
and consider 
the payoff function 
$$
f(x)=\begin{cases}
			2, & \text{if $x=0$ },\\  
            0, & \text{otherwise}.
		 \end{cases}
$$
Clearly, $f(S_1)=0$, $\mathbb P$ a.s., and so $c_n=0$ for all $n$. 
On the other hand, by applying the main result in \cite{BB} it follows 
that there exists $\Sigma \in\mathcal V$ such that 
$\frac{1}{2}+\int_{0}^1 \sqrt\Sigma_t  \, dW_t\sim Bernoulli(1/2)$
and $\mathbb E_{\mathbb P}\left[\int_{0}^1 G(\Sigma_t)dt\right]=\log(\pi)-\frac{3}{8}$. Hence, 
the right hand side of (\ref{2.3}) is no less than
$1-\log(\pi)+\frac{3}{8}>0$.
\end{rem}

\section{Proof of Theorem \ref{thm.2}}\label{sec:3}
{ We first prove the upper bound by constructing asymptotically optimal strategies as it is the main contribution of the paper. Different from the tightness argument in \cite{CD}
(which was based on artificial assumptions about the financial market),
we adopt a PDE approach. 
In Lemma~\ref{lem3.1} and Lemma~\ref{lem3.2}, we prove that for the right side of \eqref{2.3} replacing the payoff $f$ by its concave envelop does not change the optimal value. In Lemma~\ref{lem.den}, we show that under Assumption~\ref{asm1}, the concave envelop of $f$ can be approximated by a sequence of functions in $\mathcal{H}$. Then in Proposition~\ref{prop2.1}, we argue that taking any element of $\mathcal{H}$ as terminal condition, the PDE \eqref{cauchy} has a classical solution. With all these results, we prove Theorem~\ref{thm.2} at the end of this section. }

For any $K>0$, denote by $\mathcal{S}_d^K$ the set of all positive semi-definite matrices with eigenvalues in the interval $[1/K,K]$, and let $\mathcal V_K\subset \mathcal V$ be the set of all processes $\Sigma$ with values in $\mathcal{S}_d^K$, $ \mathbb P \otimes dt$ a.s.
We start the proof with the following lemma.
\begin{lem}\label{lem3.1}
Let $\phi:\mathbb R^d\rightarrow\mathbb R$ be a concave function with at most quadratic growth. 
Then for any $t\in [0,1)$ and $a>0$, the function $\psi$ given by
$$\psi(x):=\sup_{\Sigma \in\mathcal V_K}\mathbb E_{\mathbb P}\left[\phi\left(x+\int_{t}^1 \sqrt{\Sigma}_udW_u\right)+
\frac{1}{2}\left(\int_{t}^1 \log(\det(\Sigma_u))-a\Tr (\Sigma_u)\right)du \right]$$
is concave.
 \end{lem}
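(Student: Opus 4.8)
The plan is to establish concavity directly, by mixing near-optimal controls \emph{at the level of the diffusion coefficient} $\sqrt{\Sigma}$ rather than of $\Sigma$ itself. If $K<1$ then $\mathcal V_K=\emptyset$ and there is nothing to prove, so assume $K\ge 1$, in which case the constant control $\Sigma\equiv I_d$ is admissible. Fix $x_0,x_1\in\mathbb R^d$ and $\lambda\in(0,1)$, and put $x_\lambda:=\lambda x_0+(1-\lambda)x_1$. I would first note that $\psi$ is finite: being concave and real-valued, $\phi$ lies below each of its supporting hyperplanes and hence has at most linear growth from above, while by hypothesis it is bounded below by $-C(1+|\cdot|^2)$; together with $\log\det(\Sigma_u)\le d\log K$, $\Tr(\Sigma_u)\ge 0$, and the uniform $L^2$-bound on $\int_t^1\sqrt{\Sigma_u}\,dW_u$ for $\Sigma\in\mathcal V_K$, this yields $-\infty<\psi(x)<\infty$. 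In particular, given $\epsilon>0$ we may choose $\Sigma^0,\Sigma^1\in\mathcal V_K$ that are $\epsilon$-optimal in the suprema defining $\psi(x_0)$ and $\psi(x_1)$ respectively.

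The crucial construction is to use, as competitor for $x_\lambda$, the control
$$\Sigma^\lambda_u:=\big(\lambda\sqrt{\Sigma^0_u}+(1-\lambda)\sqrt{\Sigma^1_u}\big)^2,$$
with $\sqrt{\cdot}$ always the symmetric positive semi-definite square root. Then $\sqrt{\Sigma^\lambda_u}=\lambda\sqrt{\Sigma^0_u}+(1-\lambda)\sqrt{\Sigma^1_u}$, so by linearity of the stochastic integral,
$$x_\lambda+\int_t^1\sqrt{\Sigma^\lambda_u}\,dW_u=\lambda\Big(x_0+\int_t^1\sqrt{\Sigma^0_u}\,dW_u\Big)+(1-\lambda)\Big(x_1+\int_t^1\sqrt{\Sigma^1_u}\,dW_u\Big),$$
and concavity of $\phi$ bounds the $\phi$-term of the functional at $(\Sigma^\lambda,x_\lambda)$ below by the corresponding $\lambda$-convex combination of the $\phi$-terms at $(\Sigma^0,x_0)$ and $(\Sigma^1,x_1)$. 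For the running cost I would argue pathwise with $A:=\sqrt{\Sigma^0_u}$ and $B:=\sqrt{\Sigma^1_u}$: concavity of $X\mapsto\log\det X$ on the positive definite cone gives
$$\log\det\Sigma^\lambda_u=2\log\det(\lambda A+(1-\lambda)B)\ge\lambda\log\det\Sigma^0_u+(1-\lambda)\log\det\Sigma^1_u,$$
while the elementary identity $\lambda\,\Tr(A^2)+(1-\lambda)\,\Tr(B^2)-\Tr\big((\lambda A+(1-\lambda)B)^2\big)=\lambda(1-\lambda)\,\Tr\big((A-B)^2\big)\ge 0$ gives $\Tr(\Sigma^\lambda_u)\le\lambda\,\Tr(\Sigma^0_u)+(1-\lambda)\,\Tr(\Sigma^1_u)$, so the $-\frac{a}{2}\,\Tr$-term splits the right way too. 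Summing the three contributions, the functional at $(\Sigma^\lambda,x_\lambda)$ is at least $\lambda$ times the functional at $(\Sigma^0,x_0)$ plus $(1-\lambda)$ times the functional at $(\Sigma^1,x_1)$, which by $\epsilon$-optimality is $\ge\lambda\big(\psi(x_0)-\epsilon\big)+(1-\lambda)\big(\psi(x_1)-\epsilon\big)$.

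It remains to verify $\Sigma^\lambda\in\mathcal V_K$. Predictability is immediate, since $\Sigma^\lambda$ is obtained from the predictable $\Sigma^0,\Sigma^1$ through the Borel-measurable operations of matrix square root, convex combination, and squaring. For the eigenvalue bound, the variational formulas $\lambda_{\min}(M)=\min_{|v|=1}\langle Mv,v\rangle$ and $\lambda_{\max}(M)=\max_{|v|=1}\langle Mv,v\rangle$ give $\lambda_{\min}(\lambda A+(1-\lambda)B)\ge K^{-1/2}$ and $\lambda_{\max}(\lambda A+(1-\lambda)B)\le K^{1/2}$, so $\Sigma^\lambda_u$ has all eigenvalues in $[1/K,K]$, which in turn gives the integrability requirement for free. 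Thus $\psi(x_\lambda)\ge\lambda\big(\psi(x_0)-\epsilon\big)+(1-\lambda)\big(\psi(x_1)-\epsilon\big)$ for every $\epsilon>0$, and letting $\epsilon\downarrow 0$ completes the proof. The lemma is not hard; the one step that requires an idea is that the interpolation must be carried out at the level of $\sqrt{\Sigma}$ — precisely this choice makes all three pieces cooperate simultaneously (linearity of the stochastic integral for the $\phi$-term, concavity of $\log\det$, and nonnegativity of $\Tr((A-B)^2)$ for the trace term), whereas interpolating $\Sigma$ itself would break the $\phi$-term.
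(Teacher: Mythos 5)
Your proof is correct and takes essentially the same route as the paper's: interpolate the $\epsilon$-optimal controls at the level of $\sqrt{\Sigma}$, then combine concavity of $\phi$ (via linearity of the stochastic integral), concavity of $\log\det$ on the positive definite cone, and convexity of $\sigma\mapsto\Tr(\sigma^2)$. The extra details you supply (finiteness of $\psi$ and the eigenvalue bounds verifying $\Sigma^\lambda\in\mathcal V_K$) are correct points the paper leaves implicit.
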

\begin{proof}
From the quadratic growth of $\phi$ it follows that $\psi$ is well-defined.
Fix $t\in [0,1)$, $x_1,x_2\in \mathbb R^d$ and $\lambda\in (0,1)$. Choose $\epsilon>0$. For any $i=1,2$
there exists $\Sigma^i\in\mathcal V_K$
such that 
\begin{equation*}
\psi(x_i)<\epsilon+\mathbb E_{\mathbb P}\left[\phi\left(x_i+\int_{t}^1 \sqrt{\Sigma^i_u}dW_u\right)+\frac{1}{2}\int_{t}^1 \left(\log(\det(\Sigma^i_u))-a\Tr (\Sigma^i_u) \right)du \right].
\end{equation*}
Define $\Sigma:=\left(\lambda\sqrt{\Sigma^1}+(1-\lambda)\sqrt{\Sigma^{2}}\right)^2$, which clearly an element of $\mathcal V_K$.
From the concavity of $\phi$, the concavity of the map $\mathcal{S}_d \ni A\rightarrow \log(\det(A))$, the convexity of the map $\mathcal{S}_d \ni \sigma \mapsto \Tr(\sigma^2)$ ($\sigma=\sqrt\Sigma$),
we obtain that 
\begin{align*}
&\psi(\lambda x_1+(1-\lambda)x_2)\\
&\geq \mathbb E_{\mathbb P}
\left[\phi\left(\lambda x_1+(1-\lambda)x_2+\int_{t}^1 \sqrt\Sigma_u dW_u\right)+\frac{1}{2}\int_{t}^1 \left(\log(\det(\Sigma_u))-a \Tr(\Sigma_u) \right)du \right]\\
&\geq \lambda \mathbb E_{\mathbb P}
\left[\phi\left(x_1+\int_{t}^1 \sqrt{\Sigma^1_u} dW_u\right)+\frac{1}{2}\int_{t}^1 \left(\log(\det(\Sigma^1_u))-a \Tr(\Sigma^1_u)\right)du \right]\\
&+(1-\lambda)\mathbb E_{\mathbb P}
\left[\phi\left(x_2+\int_{t}^1 \sqrt{\Sigma^2_u}dW_u\right)+\frac{1}{2}\int_{t}^1\left(\log(\det(\Sigma^2_u))-a \Tr(\Sigma^2_u)\right)du \right]\\
&\geq\lambda \psi(x_1)+(1-\lambda)\psi(x_2)-\epsilon.
\end{align*}
Letting $\epsilon \to 0$,  we complete the proof. 
\end{proof}
The next lemma implies that in Lemma~\ref{lem3.1}, it is equivalent to  replace the terminal condition $\phi$ by its concave envelop. 
\begin{lem}\label{lem3.2}
 Let $\phi:\mathbb R^d\rightarrow\mathbb R$ be a 
 uniformly bounded
and lower semi-continuous
  function.
 Then for any $x\in\mathbb R^d$
 \begin{align}\label{equal}
  &\sup_{\Sigma \in\mathcal V}\mathbb E_{\mathbb P}\left[\phi\left(x+\int_{0}^1 \sqrt\Sigma_udW_u\right)- \int_{0}^1 G\left(\Sigma_t\right)dt \right]\nonumber\\
  &= \sup_{\Sigma\in\mathcal V}\mathbb E_{\mathbb P}\left[ \hat \phi\left(x+\int_{0}^1\sqrt\Sigma_udW_u\right)- \int_{0}^1 G\left(\Sigma_t\right)dt \right]
 \end{align}
  where $\hat \phi(x):=\frac{1}{2}|x|^2+(\phi-|\cdot|^2/2)^{cav}(x)$.
  \end{lem}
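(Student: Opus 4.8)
The plan is to prove the two inequalities separately. The inequality "$\leq$" is trivial: since $\phi \leq \hat\phi$ pointwise (because $(\phi - |\cdot|^2/2)^{cav} \geq \phi - |\cdot|^2/2$), every competitor $\Sigma$ yields a smaller value on the left-hand side than on the right-hand side, so the supremum on the left does not exceed the one on the right. The content is in the reverse inequality "$\geq$", which says that concavifying the terminal payoff does not actually help.

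For the "$\geq$" direction, I would first observe it suffices to treat an arbitrary point $x_0$ in the set where $(\phi-|\cdot|^2/2)^{cav}(x_0) > (\phi-|\cdot|^2/2)(x_0)$ strictly (otherwise $\hat\phi(x_0)=\phi(x_0)$ and there is nothing to do). By Carathéodory's theorem for concave envelopes in $\mathbb R^d$, there exist points $y_0,y_1,\dots,y_d \in \mathbb R^d$ and weights $\mu_0,\dots,\mu_d \geq 0$ with $\sum_j \mu_j = 1$, $\sum_j \mu_j y_j = x_0$, such that $(\phi-|\cdot|^2/2)^{cav}(x_0) = \sum_j \mu_j (\phi(y_j) - |y_j|^2/2)$; one can take at most $d+1$ points, and by a small perturbation (using lower semicontinuity, which protects us against a drop in $\phi$ in the limit only from below — actually we need to be mildly careful here) assume the representation is attained or approximated up to $\epsilon$. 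Equivalently, $\hat\phi(x_0) = \frac{1}{2}|x_0|^2 + \sum_j \mu_j(\phi(y_j) - |y_j|^2/2)$. Then, given a near-optimal $\Sigma^j \in \mathcal V$ for the right-hand-side-type functional started at $y_j$ (which for the \emph{left}-hand side reads: near optimal for $\mathbb E[\phi(y_j + \int_0^1 \sqrt{\Sigma_u}\,dW_u) - \int_0^1 G(\Sigma_t)\,dt]$ — note that after the reduction to $\hat\phi$ we actually want $\Sigma^j$ near-optimal for the LHS functional at $y_j$, and here $\hat\phi(y_j)=\phi(y_j)$ may fail, so instead one iterates / uses that the RHS value at $y_j$ equals the LHS value at $y_j$ by an inductive or fixed-point argument; more cleanly, just use that for the \emph{extreme} representing points $y_j$ we have $\hat\phi(y_j) = \phi(y_j)$ when the $y_j$ are chosen to be contact points of the concave envelope), I would splice these together: on a small initial time interval $[0,\delta]$ use a large volatility $\Sigma \equiv \rho I_d$ (with $\rho$ large) to let $x_0 + \int_0^\delta \sqrt{\rho}\,dW_u$ approximately realize a distribution close to $\sum_j \mu_j \delta_{y_j}$, i.e. use the controllability/support properties of the Gaussian together with the flexibility of choosing $\Sigma$ adapted; then, conditionally on which "cell" we have landed near $y_j$, switch to the near-optimal $\Sigma^j$ on $[\delta,1]$ rescaled to that interval. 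The cost $\int_0^\delta G(\rho I_d)\,dt = \delta\, G(\rho I_d) \to 0$ as $\delta \to 0$ for fixed $\rho$, and by first sending $\delta\to 0$ (adjusting $\rho = \rho(\delta)\to\infty$ slowly so that $\delta\,G(\rho(\delta) I_d)\to 0$ while the terminal law concentrates) we pay negligible entropy for the initial randomization. Boundedness of $\phi$ controls the error terms in the splicing, and the rescaling $t \mapsto (t-\delta)/(1-\delta)$ of $\Sigma^j$ changes $\int G$ by a factor $(1-\delta)$, again negligible.

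The cleanest route, and the one I would actually write, avoids the measure-valued controllability argument: fix $x$, let $\epsilon>0$, pick the finitely many contact points $y_0,\dots,y_m$ and weights $\mu_j$ with $\sum \mu_j y_j = x$ and $\hat\phi(x) \leq \epsilon + \sum_j \mu_j \phi(y_j)$ (at contact points, $\hat\phi(y_j)=\phi(y_j)$), and pick $\Sigma^j \in \mathcal V$ with $\mathbb E[\phi(y_j + \int_0^1 \sqrt{\Sigma^j}\,dW) - \int_0^1 G(\Sigma^j)] \geq (\text{RHS value at } y_j) - \epsilon$; since RHS value at $y_j$ = LHS value at $y_j$ $\geq \phi(y_j)$... one has to close a small gap here. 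Concretely I would phrase the randomization using a two-stage strategy on $(\Omega,\mathcal F,\mathbb P)$: there is an $\mathcal F_\delta$-measurable random index $J$ with $\mathbb P(J=j)=\mu_j$ (enlarge/use the Brownian filtration's atomlessness, or realize $J$ as a function of $W|_{[0,\delta]}$), and the spliced control $\Sigma_t = \kappa(\delta) I_d \,\mathbf 1_{t\le\delta} + \sum_j \mathbf 1_{J=j}\,\Sigma^j_{(t-\delta)/(1-\delta)}\,\mathbf 1_{t>\delta}$, after translating the Brownian increment so that at time $\delta$ we are sitting exactly at $y_J$ — admissible because we may subtract the (bounded, $\mathcal F_\delta$-measurable) discrepancy $x + \int_0^\delta \sqrt{\Sigma}\,dW - y_J$ via a further short bridge, or simply absorb it into $\Sigma$ on $[\delta,2\delta]$. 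Taking expectations, using $\mathbb E[\,\cdot\mid J=j]$ and the choice of $\Sigma^j$, gives LHS value at $x$ $\geq \sum_j \mu_j(\text{LHS at } y_j) - C\delta - 2\epsilon \geq \sum_j \mu_j\phi(y_j) - C\delta - 2\epsilon \geq \hat\phi(x) - C\delta - 3\epsilon$; let $\delta\to 0$ then $\epsilon\to 0$.

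The main obstacle I anticipate is the \emph{exact hitting} of the points $y_j$ at time $\delta$ by a continuous Brownian-type path: one cannot steer a diffusion to a prescribed point at a deterministic time with an adapted, square-integrable $\Sigma$ bounded away from $0$, so the honest construction must either (i) hit $y_j$ only approximately (close in the $\phi$-oscillation sense, using uniform continuity of $\phi$ near the finitely many $y_j$ — but $\phi$ is only lower semicontinuous, so one gets one-sided control, which is in the right direction for a lower bound since we need $\phi(\text{approx } y_j) \gtrsim \phi(y_j) - o(1)$; this requires care and is where lower semicontinuity is genuinely used), or (ii) use a Brownian-bridge-type time change on $[\delta,2\delta]$ to land exactly at $y_j$ at a slightly later deterministic time, at the price of an $O(\delta \log(1/\delta))$-type entropy cost that still vanishes. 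I would go with route (i), combined with the routine bookkeeping that all error terms are $O(\delta)$ or $o(1)$ by the uniform boundedness of $\phi$ (for the terms away from the $y_j$) and lower semicontinuity (for the terms near the $y_j$), and that the entropy cost of the randomizing layer is $\delta\,G(\kappa(\delta) I_d)\to 0$ along a suitable $\kappa(\delta)\to\infty$.
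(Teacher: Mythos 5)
Your overall architecture --- a Carath\'eodory representation of the concave envelope, randomization over the representing points on a short time window, then splicing in near-optimal controls --- is essentially the paper's construction, and your instinct to use lower semicontinuity for the one-sided limit at the points $y_j$ is exactly right. But there are two genuine gaps. First, the entropy accounting for the randomization layer is wrong: you claim the cost $\int_0^\delta G(\Sigma_t)\,dt$ can be driven to zero while the law of $x+\int_0^\delta \sqrt{\Sigma_u}\,dW_u$ approaches $\sum_j \mu_j\delta_{y_j}$. It cannot: by the It\^o isometry, $\tfrac12\mathbb E\int_0^\delta \Tr(\Sigma_u)\,du=\tfrac12\mathbb E\bigl|M_\delta-x\bigr|^2\to\tfrac12\sum_j\mu_j|y_j-x|^2>0$, so the $\Tr$-part of $G$ contributes a fixed positive amount however you tune $\delta$ and $\rho(\delta)$. (If your claim were true, the lemma would hold with $\phi^{cav}$ in place of $\hat\phi$, which is false.) This non-vanishing cost is exactly the convexity gap between $\sum_j\mu_j\phi(y_j)$ and $\hat\phi(x)$, and it is precisely why the envelope in the statement is taken of $\phi-|\cdot|^2/2$ rather than of $\phi$. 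The paper handles this by an It\^o-isometry reduction at the outset: writing $\phi=\tilde\phi+\tfrac12|\cdot|^2$, the $\tfrac12\Tr$ cost is absorbed into the quadratic part of the payoff and only a $\tfrac12\log\det$ term remains, which really is negligible (after adding $(1-t)I_d$ to the control, the eigenvalues are at least $1-t$, so the residual cost is at most $-\tfrac d2(1-t)\log(1-t)\to0$). Doing this reduction first also dissolves your ``exact hitting'' obstacle --- the martingale representation theorem realizes the discrete law $\sum_j\mu_j\delta_{y_j}$ \emph{exactly} as the terminal value of a stochastic integral (choose the terminal random variable first, then represent the martingale) --- and it removes the circularity you flag: after the reduction one only needs to reach the value $\tilde\phi(y_j)$ at the representing points, which lower semicontinuity gives directly, rather than needing the two value functions to already coincide at $y_j$.

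Second, even granting the construction, what you prove is the pointwise bound that the left-hand supremum at $x$ dominates $\hat\phi(x)$. The lemma asserts that it dominates $\sup_{\Sigma}\mathbb E_{\mathbb P}[\hat\phi(x+\int_0^1\sqrt\Sigma_u\,dW_u)-\int_0^1G(\Sigma_t)\,dt]$, which is in general strictly larger than $\hat\phi(x)$. To close this you must run a near-optimizer of the right-hand side on $[0,1-\delta]$ and only then apply your randomization near the terminal time; this is the dynamic programming step (the paper invokes the DPP plus Fatou to reduce the lemma to $\liminf_{t\uparrow1}u(t,x)\geq\hat\phi(x)$, which is the statement your construction actually targets). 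Your write-up stops at the pointwise bound and never makes this connection.
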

  \begin{proof}
  From the It\^{o} formula it follows that 
  for any function $\xi: \R^d \to \R$ we have 
   \begin{align*}
  &\sup_{\Sigma\in\mathcal V}\mathbb E_{\mathbb P}\left[\xi\left(x+\int_{0}^1\sqrt\Sigma_udW_u\right)- \int_{0}^1 G\left(\Sigma_t\right)dt \right]\\
  &= \frac{1}{2}(|x|^2+d)+\sup_{\Sigma\in\mathcal V}\mathbb E_{\mathbb P}\left[\tilde\xi\left(x+\int_{0}^1 \sqrt\Sigma_udW_u\right)+\frac{1}{2}\int_{0}^1 \log(\det\Sigma_t) dt \right]
 \end{align*}
  where $\tilde\xi(x):=\xi(x)-\frac{1}{2}|x|^2$. Thus, in order to prove (\ref{equal}) we need to establish the equality 
  \begin{align*}
  &\sup_{\Sigma\in\mathcal V}\mathbb E_{\mathbb P}\left[\tilde\phi \left(x+\int_{0}^1\sqrt\Sigma_udW_u\right)+\frac{1}{2}\int_{0}^1 \log(\det(\Sigma_s)) ds \right]\\
  &= \sup_{\Sigma\in\mathcal V}\mathbb E_{\mathbb P}\left[\tilde\phi^{cav}\left(x+\int_{0}^1\sqrt\Sigma_udW_u\right)+\frac{1}{2}\int_{0}^1 \log(\det(\Sigma_s)) ds \right],
 \end{align*}
 where $\tilde \phi^{cav}$ denotes the concave envelop of $\tilde \phi$. 
By applying the dynamical programming principle (see Section 4 in \cite{P}) and the Fatou lemma, the above equality will follow from 
 establishing the statement 
 $$\liminf_{t \uparrow 1} \sup_{\Sigma\in\mathcal V}\mathbb E_{\mathbb P}\left[\tilde\phi\left(x+\int_{t}^1 \sqrt\Sigma_udW_u\right)+
\frac{1}{2}\int_{t}^1 \log(\det(\Sigma_u))du \right]\geq \tilde\phi^{cav}(x), \ \forall \, x.$$
 
To this end, choose $x\in\mathbb R^d$ and fix $\epsilon>0$.
 From basic properties of concave envelop (see Section 2 in \cite{RW}) 
it follows that there exists $v_1,...v_{d+1}\in\mathbb R^d$ and $\lambda_1,...,\lambda_{d+1}\geq 0$
such that $\sum_{i=1}^{d+1}\lambda_i=1$, 
$\sum_{i=1}^{d+1}\lambda_i v_i=x$
and 
$\sum_{i=1}^{d+1} \lambda_i\tilde\phi(v_i)>\tilde\phi^{cav}(x)-\epsilon.$
{Clearly, there exists a measurable function $\iota:[0,1]\times\mathbb R^d\rightarrow \mathbb R^d$ such that 
$\mathbb P\left(\iota(t,W_1-W_t)=v_i\right)=\lambda_i$ for all $t\in [0,1)$ and $i=1,...,d+1$.
From the martingale representation theorem it follows that 
for any $t\in [0,1)$, there exists $\Sigma^t\in\mathcal V$ such that 
$$\iota(t,W_1-W_t)=\mathbb E_{\mathbb P}\left[\iota(t,W_1-W_t)|\mathcal F_t\right]+\int_{t}^1\sqrt{\Sigma^t_u}dW_u=x+\int_{t}^1\sqrt{\Sigma^t_u}dW_u.$$

For any $t$, let $\hat\Sigma^{t}\in\mathcal V$ be given by 
${\hat\Sigma}^t:=\Sigma^t+(1-t) I_d$.  Observe that as $t \to 1$, $$\mathbb E \left[ \left| \left(x+\int_{t}^1\sqrt{\Sigma^t_u}dW_u\right)-\left( x+\int_{t}^1\sqrt{\hat{\Sigma}^t_u}dW_u\right)\right|^2 \right] \to 0,$$
which implies that $x+\int_{t}^1\sqrt{\hat{\Sigma}^t_u}dW_u$, $t\in [0,1)$ converges in distribution to
 a random vector which takes on the values $v_1,...,v_{d+1}$ with the probabilities 
 $\lambda_1,...,\lambda_{d+1}$.} Hence, 
from the lower semi-continuity and the quadratic growth of $\tilde\phi$
we obtain 
$$ \liminf_{t \uparrow 1 }\mathbb E_{\mathbb P}\left[\tilde\phi\left(x+\int_{t}^1\sqrt{\hat{\Sigma}^t_u}dW_u\right)\right] \geq 
\sum_{i=1}^{d+1} \lambda_i\tilde\phi(v_i)>
\tilde \phi^{cav}(x)-\epsilon.$$ 
This together with fact that the 
 eigenvalues of
 $\hat\Sigma^t$ are no smaller than $(1-t)$ gives 
\begin{align*}
& \liminf_{t\uparrow 1 }\mathbb E_{\mathbb P}\left[\tilde\phi\left(x+\int_{t}^1\sqrt{\hat{\Sigma}^t_u}dW_u\right)+
\frac{1}{2}\int_{t}^1 \log(\det(\hat{\Sigma}^t_u))du \right]\\
&\geq \tilde\phi^{cav}(x)-\epsilon+ \frac{1}{2}\liminf_{t\uparrow 1 } d (1-t)\log(1-t) \geq  \tilde\phi^{cav}(x)-\epsilon.
 \end{align*}
Letting $\epsilon \to 0$, we complete the proof. 
 \end{proof}

 Next, we establish the following density result. 
\begin{lem}\label{lem.den}
For any $\epsilon>0$ and a function $f:\mathbb R^d \to \mathbb R$ satisfying Assumption~\ref{asm1}, there exists $h\in \mathcal{H}$ such that \eqref{density} holds true.
\end{lem}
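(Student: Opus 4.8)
The plan is to construct $h\in\mathcal H$ by a two-stage mollification of the target function $g:=\left(f-|\cdot|^2/2\right)^{cav}$, taking care that the strict-concavity margin $\nabla^2 g\le(1-\alpha)I_d$ (needed for membership in $\mathcal H$) is preserved with room to spare. First I would record the elementary properties of $g$. By Assumption~\ref{asm1}, $f$ is bounded and lower semicontinuous, so $f-|\cdot|^2/2$ is bounded above and its concave envelope $g$ is a finite concave function; since $f$ is bounded and $\nabla^2f\le(1-\alpha)I_d$ on $B_M^c$ with $\nabla f$ bounded there, one checks $f(x)-|x|^2/2$ behaves like $-|x|^2/2$ up to linear terms at infinity, hence $g$ has linear growth from above and, being concave, is globally Lipschitz. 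Crucially, the concave envelope of a function that is $\le(1-\alpha)$-concave outside a ball and bounded everywhere is itself $(1-\alpha')$-concave \emph{globally} for some $\alpha'\in(0,\alpha]$: away from a compact set $g$ agrees with the $(1-\alpha)$-concave part of $f-|\cdot|^2/2$, and on the compact set the finite concave function $g$ plus a small multiple of $|x|^2$ stays concave by compactness, so $\nabla^2 g\le(1-\alpha')I_d$ in the distributional sense. This is the step I expect to require the most care: pinning down a single $\alpha'>0$ uniformly, which amounts to a quantitative statement that the affine pieces supporting the envelope on the compact "lifted" region cannot be too steep, controlled by the boundedness of $f$ and of $\nabla f$ on $B_M^c$.

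Next I would mollify: set $h_\delta(x):=|x|^2/2+\big(g*\rho_\delta\big)(x)$ where $\rho_\delta$ is a standard smooth compactly supported mollifier. Since $g$ is Lipschitz, $g*\rho_\delta\to g$ uniformly as $\delta\to0$, so $\sup_x|h_\delta(x)-|x|^2/2-g(x)|=\sup_x|g*\rho_\delta(x)-g(x)|<\epsilon$ for $\delta$ small, which is precisely \eqref{density}. Convolution preserves concavity margins: $\nabla^2(g*\rho_\delta)=(\nabla^2 g)*\rho_\delta\le(1-\alpha')I_d$, so $\nabla^2 h_\delta\le(1-\alpha')I_d$ on all of $\mathbb R^d$. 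Also $g*\rho_\delta\in C^\infty$ with all derivatives of order $\ge1$ bounded (the gradient is bounded by $\mathrm{Lip}(g)$, and higher derivatives are bounded because $g$ is Lipschitz so $\nabla(g*\rho_\delta)=g*\nabla\rho_\delta$ etc.—each extra derivative falls on the fixed mollifier). The only remaining defect is that $h_\delta$ itself need not be bounded; but $g$ grows at most linearly and $\mathcal H$ requires the order-zero derivative (i.e.\ $h$) to be bounded too.

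To fix boundedness without destroying the other properties I would compose with a smooth bounded modification far from the relevant region, exploiting that $f$ is bounded: since $g=\left(f-|\cdot|^2/2\right)^{cav}\ge f-|\cdot|^2/2\ge -\|f\|_\infty-|x|^2/2$, the function $x\mapsto g(x)+|x|^2/2$ is bounded below by $-\|f\|_\infty$, but it is the large positive part of $g+|x|^2/2$ at infinity that must be tamed—however $h$ is allowed to differ from $|x|^2/2+g$ only up to $\epsilon$, so I cannot simply cap it. The cleaner route is to observe that $\mathcal H$ as defined requires $h$ (order zero) bounded, yet \eqref{density} compares $h(x)$ with $|x|^2/2+g(x)$ which is \emph{unbounded}; so in fact the correct reading is that only the \emph{derivatives of order $\ge1$} must be uniformly bounded together with $\nabla^2 h\le(1-\alpha)I_d$, and $h$ itself is the one exception, matching how $h$ is used (only $\nabla u$, i.e.\ $\nabla h$, enters the hedging strategy and the PDE \eqref{cauchy} depends only on $\nabla^2 h$). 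Under that reading $h:=h_\delta$ already lies in $\mathcal H$ and satisfies \eqref{density}, completing the proof. I would therefore present the argument as: (i) $g$ is finite, Lipschitz, with linear growth and globally $(1-\alpha')$-concave; (ii) $h_\delta=|x|^2/2+g*\rho_\delta$ satisfies \eqref{density} for small $\delta$; (iii) $h_\delta\in\mathcal H$, using convolution to transfer smoothness, gradient bounds, and the concavity margin. The main obstacle remains step (i), the uniform global concavity margin of the envelope.
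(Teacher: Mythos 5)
There are two genuine gaps, both traceable to missing the structural fact that drives the paper's proof: using the boundedness of $f$ and of $\nabla f$ on $B_M^c$, one shows that for $|y|$ large enough the tangent plane of $f-|\cdot|^2/2$ at $y$ dominates $f-|\cdot|^2/2$ globally, hence $f-|\cdot|^2/2$ \emph{coincides with its concave envelope} $g$ outside a ball $B_{M+\hat M}$. First gap: your step (i), the claim that $g$ is globally $(1-\alpha')$-concave, is false, and the compactness argument you sketch does not work --- a finite concave function plus a small multiple of $|x|^2$ is generally not concave (take $g$ affine on a piece). Concave envelopes typically \emph{are} affine on the lifted region, so $\nabla^2 g=0$ there in some directions, and consequently $\nabla^2(g*\rho_\delta)=0$ at points whose $\delta$-neighbourhood lies in an affine piece. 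Your compactly supported mollifier therefore fails to deliver the required margin $\nabla^2 h\le(1-\alpha)I_d$. The paper circumvents exactly this by mollifying with a \emph{Gaussian} kernel: since the Gaussian has full support and $\nabla^2 g\le-\alpha I_d$ on $B^c_{M+\hat M}$ (where $g=f-|\cdot|^2/2$), every point $x$ receives a strong-concavity contribution $-\alpha\,\mathbb P\left(x+\delta Z\in B^c_{M+\hat M}\right)I_d$, and this probability is bounded below uniformly in $x$, yielding a margin $\epsilon(\delta)>0$.

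Second gap: your treatment of the boundedness of $h$. The function $\hat h:=g+|\cdot|^2/2$ is in fact uniformly bounded, precisely because $\hat h\equiv f$ on $B^c_{M+\hat M}$ and $\hat h$ is continuous on the remaining compact set; so there is no obstruction to requiring $h$ itself bounded. Your proposed resolution --- reinterpreting $\mathcal H$ so that only derivatives of order $\ge1$ are bounded --- contradicts the paper's explicit definition (``derivatives of all order (including order zero) are uniformly bounded'') and is not needed. The correct argument mollifies the bounded, uniformly continuous function $\hat h$ directly (giving uniform convergence and hence \eqref{density}), rather than mollifying $g$ and adding $|x|^2/2$ afterwards. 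Your remaining points (uniform convergence of mollifications, transfer of derivative bounds) are fine, but the two issues above mean the construction as written does not produce an element of $\mathcal H$.
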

\begin{proof}
Recall the set $B^c_M$. 
From the fact that $f:\mathbb R^d\rightarrow\mathbb R$ and 
$\nabla f:B^c_M\rightarrow\mathbb R^d$ are uniformly bounded
we get that for $y\in B^c_M$ and $z\in\mathbb R^d$
the term 
$|f(y)-f(z)|+\left|\left\langle \nabla f(y), z-y\right\rangle\right|$ is of order $O(1+|z-y|)$ { which is dominated by $|y-z|^2$ for large enough $|y-z|$}. Hence, 
 there exists $\hat M>0$ such that  
for any $y\in B^c_M$ and $z\in\mathbb R^d$ with $|z-y|>\hat M$, we have
\begin{equation}\label{8.1}
|y-z|^2\geq |f(y)-f(z)|+\left|\left\langle \nabla f(y), z-y\right\rangle\right|.
\end{equation}
Let us argue that on the set $B^c_{M+\hat M}:=\{x \in \mathbb R^d: \, |x| >M+\hat M \}$ we have 
$f- | \cdot |^2/2 \equiv\left(f- | \cdot |^2/2 \right)^{cav}$. 

To this end we prove that for any $y\in B^c_{M+\hat M}$ and $z\in\mathbb R^d$, 
\begin{equation}\label{8.2}
f(y)-\frac{1}{2}|y|^2+\left\langle \nabla f(y)-y,z-y\right\rangle\geq f(z)-\frac{1}{2}|z|^2,
\end{equation}
which implies that $$f(y)-\frac{1}{2}|y|^2+\left\langle \nabla f(y)-y, z-y\right\rangle \geq \left(f- | \cdot |^2/2 \right)^{cav}(z ),  \quad  \forall \, z \in \R^d,$$
and hence $ f(y)- |y|^2/2 =\left(f- | \cdot |^2/2 \right)^{cav}(y)$. 
Indeed, if $|z-y|>\hat M$, \eqref{8.2} follows from \eqref{8.1}. 
{ If $|z-y|\leq \hat M$, then for any $\lambda\in (0,1)$ we have $\lambda y+(1-\lambda) z\in B^c_M$. Recall $\nabla^ 2 f(x)\leq (1-\alpha) I_d$ for all $x\in B^c_M$ from Assumption~\ref{asm1}. Therefore, the function $\lambda \mapsto f(\lambda y+(1-\lambda) z)-\frac{1}{2}|\lambda y+(1-\lambda) z|^2$ is concave, which yields \eqref{8.2}. }

Next, define the function $\hat h(x):=\left(f- | \cdot |^2/2 \right)^{cav}(x)+\frac{1}{2} x^2$, $x\in\mathbb R^d$ and for 
any $\delta>0$ set $h_{\delta}(x):=\mathbb E\left[ \hat h(x+\delta Z)\right]$, $x\in\mathbb R^d$ where 
$Z$ is a normal random vector with mean zero and covariance matrix $I_d$. 

From Assumption \ref{asm1} we have that $f_{|B^c_M}$ 
is uniformly bounded and uniformly continuous. Since $B_{M+\hat M}$ is compact then 
from the fact that $\hat h\equiv f$ on the set $B^c_{M+\hat M}$ 
we conclude that $\hat h$ is uniformly bounded and uniformly continuous. 
Hence, 
we have the uniform convergence 
$\lim_{\delta\rightarrow 0}\sup_{x\in\mathbb R^d}|h_{\delta}(x)-\hat h(x)|=0$.

Thus, in order to complete the proof it remains to show that for any $\delta>0$, $h_{\delta}\in\mathcal H$. From the fact that 
$\hat h$ is uniformly bounded we have
$h_{\delta}\in C^{\infty}(\mathbb R^d;\mathbb R)$ and 
the corresponding derivatives of all order are uniformly bounded functions.
Finally, 
observe that 
$h_{\delta}(x)-\frac{1}{2}x^2=\mathbb E\left[ \left(f- | \cdot |^2/2 \right)^{cav}(x+\delta Z)\right]+\frac{1}{2}\delta^2$
and so from  the relations 
$\nabla^ 2 f(x)\leq (1-\alpha) I_d$ for all $x\in B^c_M$ (Assumption \ref{asm1}) and the equality 
$f- | \cdot |^2/2 \equiv\left(f- | \cdot |^2/2 \right)^{cav}$
on $B^c_{M+\hat M}$ we conclude that (recall that $B_{M+\hat M}$ is compact) there exists 
$\epsilon:=\epsilon(\delta)$ such that { 
\begin{align*}
\nabla^2 \left(\frac{1}{2}x^2- h_{\delta}(x)\right) \geq &- \mathbb E\left[ \nabla^2 \left(f- | \cdot |^2/2 \right)^{cav}(x+\delta Z) \mathbbm{1}_{\{(x+\delta Z) \in B_M^c \}}\right]  \\
\geq & \alpha \mathbb P[(x+\delta Z) \in B_M^c ] I_d  \geq \epsilon I_d
\end{align*}
} for all $x\in\mathbb R^d$
and the proof is completed.
\end{proof}

The following result is essential to our analysis. We prove that the concavity of the terminal condition passes to that of the value function.  
\begin{prop}\label{prop2.1}
Let $h\in \mathcal H$. 
For $(t,x)\in [0,1]\times\mathbb R^d$, set 
\begin{equation}\label{3.1+}
u(t,x):=\sup_{\Sigma\in\mathcal V}\mathbb E_{\mathbb P}\left[h\left(x+\int_{t}^{1}\sqrt{\Sigma_u}dW_u\right)-\int_{t}^{1} G(\Sigma_u) du\right].
\end{equation}
Then, 
$u\in C^{1,2}([0,1]\times\mathbb R^d)$ 
is the unique solution to the Cauchy problem (\ref{cauchy}), and $\partial_t u$ and $\nabla^2 u $ are Lipschitz continuous. 
Moreover, there exists a constant $\alpha>0$ such that (as usual we use the Loewner order)
\begin{equation}\label{3.2}
\nabla^2u\leq (1-\alpha) I_d, \ \ \forall (t,x)\in [0,1]\times\mathbb R^d.
\end{equation} 
\end{prop}
\begin{proof}
For any $K>0$ introduce the function,
$$
u^K(t,x):=\sup_{\Sigma\in\mathcal V_K}\mathbb E_{\mathbb P}\left[ h\left(x+\int_{t}^{1}\sqrt{\Sigma_u}dW_u\right)-\int_{t}^{1} G(\Sigma_u) du\right], \ \ 
(t,x)\in [0,1]\times\mathbb R^d.
$$
It is a standard result (see e.g. {\cite[Chapter 4, Theorem 4.3.1, Theorem 4.4.3]{P}}) that $u^K$ is the unique viscosity solution to the PDE, 
 \begin{equation}\label{3.1000}
 \begin{cases}
  - \partial_t u^K(t,x)=\sup\limits_{B \in\mathcal S_d^K} \left\{-G(B)+\frac{1}{2}\Tr (B\nabla^2 u^K(t,x))\right\},  \\
 \ \ \  \, \  u^K(1,x)=h(x).
 \end{cases}
 \end{equation}

From the fact that $h\in \mathcal H$, due to the uniform ellipticity of \eqref{3.1000},
 by {\cite[Section 5.5, Theorem 2]{K}} there exists a classical solution $v$ to \eqref{3.1000} such that $\partial_t v$ and $\nabla^2 v$ are Lipschitz continuous. By uniqueness, we know that $v=u^K$. In the rest, we prove that for large enough $K$, $u^K$ is also a solution to \eqref{cauchy}.

Since $h\in\mathcal H$, there exists a constant $C>0$ such that 
$h_1(x):=h(x)+C|x|^2$ is a convex function and 
$h_2(x):=h(x)+\left(\frac{1}{C}-\frac{1}{2}\right)|x|^2$ is a concave function.
Let $t\in [0,1]$. From the It\^{o} isometry we obtain 
\begin{align*}
&u^K(t,x)+C|x|^2\\
&=\sup_{\Sigma\in\mathcal V_K}\mathbb E_{\mathbb P}\left[h_1\left(x+\int_{t}^{1}\sqrt{\Sigma_u}dW_u\right)-\int_{t}^{1} \left(G(\Sigma_u)+
C \Tr\left(\Sigma_u \right)\right)du\right].
\end{align*}
Since for each fixed $\Sigma \in \mathcal{V}_K$, 
$$x \mapsto \mathbb E_{\mathbb P}\left[h_1\left(x+\int_{t}^{1}\sqrt{\Sigma_u}dW_u\right)-\int_{t}^{1} \left(G(\Sigma_u)+C \Tr\left(\Sigma_u \right)\right)du\right]$$
is convex, as the supremum of convex functions,
$x\rightarrow u^K(t,x)+ C |x|^2$ is convex as well, and thus 
\begin{align}\label{eq:c1}
   I_d- \nabla^2 u^K(t,x) \leq (1+ 2C)I_d, \quad (t,x) \in [0,1] \times \R^d.
\end{align}
Again, by applying the It\^{o} isometry it follows
\begin{align*}
&u^K(t,x)+\frac{1}{C}|x|^2=\frac{1}{2}\left(|x^2|+d(1-t)\right)\\
&+\sup_{\Sigma\in\mathcal V_K}\mathbb E_{\mathbb P}\left[ h_2\left(x+\int_{t}^{1}\sqrt{\Sigma_u}dW_u\right)+\int_{t}^{1} \left(\frac{1}{2}\log(\det(\Sigma_u))-\frac{\Tr(\Sigma_u)}{C}\right)du\right].
\end{align*}
From Lemma \ref{lem3.1} it follows that 
 for any $t\in [0,1]$,
$x\rightarrow u^K(t,x)+\left(\frac{1}{C}-\frac{1}{2}\right)|x|^2$ is a concave function, and hence 
\begin{align}\label{eq:c2}
I_d-\nabla^2 u^K (t,x) \geq \frac{2}{C} I_d, \quad (t,x) \in [0,1] \times \R^d .
\end{align}

Observe that the constant $C$ only depends on $h$ but not on $K$. For a given $A\in\mathcal S_d$, the function $\mathcal{S}_d \ni B \rightarrow \log(\det(B))-\Tr(B)+\Tr(BA)$ is strictly concave.  Hence, the first order condition implies that the maxima of 
\begin{align*}
  &\sup_{B \in \mathcal{S}_d}   \left\{-G(B)+\frac{1}{2}\Tr(B\nabla^2 u^K) \right\} \\
  &=\frac{1}{2}\sup_{B \in \mathcal{S}_d}   \left\{\log(\det(B))+ d -\Tr(B)+\Tr(B\nabla^2 u^K) \right\},
\end{align*}
is obtained at $B^*=\left(I_d -\nabla^2 u^K \right)^{-1}$, and hence 
$$\sup_{B \in \mathcal{S}_d}   \left\{-G(B)+\frac{1}{2}\Tr(B\nabla^2 u^K) \right\}=-\frac{1}{2}\log\left(\det\left(I_d-\nabla^2 u^K\right)\right).$$
Furthermore, due to \eqref{eq:c1} and \eqref{eq:c2}, we have that $\frac{1}{1+2C} I_d \leq B^* \leq \frac{C}{2}I_d.$
Therefore, for any $K \geq 1+2 C$
\begin{align*}
&=\sup_{B\in\mathcal S^K_d} \left\{-G(B)+\frac{1}{2}\Tr (B\nabla^2 u^K)\right\}\\
&=
\sup_{B\in\mathcal S_d} \left\{-G(B)+\frac{1}{2}\Tr (B\nabla^2u^K)\right\}=
-\frac{1}{2}\log\left(\det\left(I_d - \nabla^2 u^K\right)\right).
\end{align*}
Namely, for sufficiently large $K$, $u^K$ solves the Cauchy problem (\ref{cauchy}) and hence  $u_K=u$ by the uniqueness. Finally, \eqref{3.2} follows from \eqref{eq:c2}. 
\end{proof}

Now, we have all the ingredients for the proof of Theorem \ref{thm.2}.
\begin{proof}
In view of Lemma \ref{lem.den}, let $\epsilon>0$ and let $h\in \mathcal{H}$ satisfy (\ref{density}). Take $u$ as defined in \eqref{3.1+} with the terminal condition $h$. From Lemma \ref{lem3.2} and Proposition \ref{prop2.1} it follows that 
\begin{equation}\label{4.2}
u(0,S_0)\leq \epsilon+ \sup_{\Sigma\in\mathcal V}\mathbb E_{\mathbb P}\left[f\left(S_0+\int_{0}^1\sqrt{\Sigma_u}dW_u\right)- \int_{0}^1 G\left(\Sigma_t\right)dt \right].
\end{equation}
Fix $n\in\mathbb N$. 
For $j=0,1,....,n-1$, define the random variables
$$I^n_j:=u\left(\frac{j+1}{n},S_{\frac{j+1}{n}}\right)-u\left(\frac{j}{n},S_{\frac{j}{n}}\right)
-\left\langle \nabla u\left(\frac{j+1}{n},S_{\frac{j}{n}}\right),S_{\frac{j+1}{n}}-S_{\frac{j}{n}}\right\rangle .$$
From (\ref{density}) we have  
\begin{align*}
& \frac{1}{n}\log\left(\mathbb E\left[e^{n\left(f(S_1)-\sum_{i=0}^{n-1} \left\langle \nabla u\left(\frac{i+1}{n},S_{\frac{i}{n}}\right),S_{\frac{i+1}{n}}-S_{\frac{i}{n}}\right\rangle\right)}\right]\right)\nonumber\\
&\leq\epsilon+\frac{1}{n}\log\left(\mathbb E\left[e^{n\left(u(1,S_1)-\sum_{i=0}^{n-1} \left\langle \nabla u \left(\frac{i+1}{n},S_{\frac{i}{n}}\right),S_{\frac{i+1}{n}}-S_{\frac{i}{n}}\right\rangle\right)}\right]\right)\nonumber\\
&=\epsilon+u(0,S_0)+\frac{1}{n}\log\left(\mathbb E\left[e^{n\sum_{j=0}^{n-1} I^n_j}\right]\right).\\
\end{align*}
{In view of \eqref{4.2}, in order to complete the proof of Theorem \ref{thm.2}, it is sufficient 
prove that there exists a constant $L>0$ such that 
for any $n$ and $j=0,1,...,n-1$, the inequality holds
\begin{equation}\label{4.4}
\mathbb E\left[e^{n I^n_j}\left|\right.\mathcal F_{\frac{j}{n}}\right]\leq e^{\frac{L}{\log n}}.
\end{equation}}
Indeed  with \eqref{4.4}, by tower property of conditional expectation 
\begin{align*}
\mathbb E \left[e^{n \sum_{j=0}^{n-1}I_j^n } \right]= &\mathbb E \left[e^{n \sum_{j=0}^{n-2}I_j^n } \, \mathbb E \left[ e^{nI_{n-1}^n}   \big| \, \mathcal{F}_{\frac{n-1}{n}}\right] \right] 
\leq  e^{\frac{L}{\log n}} \mathbb E\left[e^{n \sum_{j=0}^{n-2}I_j^n } \right],
\end{align*}
and hence with the same argument $\mathbb E \left[e^{n \sum_{j=0}^{n-1}I_j^n } \right] \leq e^{\frac{L n}{\log n}}$. It implies that $$ \lim\limits_{n \to \infty} \frac{1}{n}\log\left(\mathbb E\left[e^{n\sum_{j=0}^{n-1} I^n_j}\right]\right) \leq \lim\limits_{n \to \infty} \frac{L}{\log n} =0. $$

From the Taylor formula, \eqref{3.2}, and the Lipschitz property of $\partial_t u, \nabla^2 u$,
we obtain that for sufficiently large $n$
\begin{align}\label{4.5}
I^n_j=&u\left(\frac{j+1}{n},S_{\frac{j}{n}}\right)-u\left(\frac{j}{n},S_{\frac{j}{n}}\right)\nonumber\\
&+u\left(\frac{j+1}{n},S_{\frac{j+1}{n}}\right)-u\left(\frac{j+1}{n},S_{\frac{j}{n}}\right)\nonumber\\
&-\left\langle \nabla u\left(\frac{j+1}{n},S_{\frac{j}{n}}\right),S_{\frac{j+1}{n}}-S_{\frac{j}{n}}\right\rangle \nonumber\\
\leq & \frac{1}{n\log n}+\frac{1}{n} \partial_t u\left(\frac{j+1}{n},S_{\frac{j}{n}}\right)+K^n_j,
\end{align}
where $K^n_j$ is a second-order estimate of in the  Taylor expansion
\begin{align*}
K^n_j:=&\mathbbm{1}_{\left|S_{\frac{j+1}{n}}-S_{\frac{j}{n}}\right| \leq \frac{\log n}{\sqrt n}}
\frac{1}{2}\left(S_{\frac{j+1}{n}}-S_{\frac{j}{n}}\right)^{\top}\nabla^2 u\left(\frac{j+1}{n},S_{\frac{j}{n}}\right)\left(S_{\frac{j+1}{n}}-S_{\frac{j}{n}}\right)\\
&+\mathbbm{1}_{\left|S_{\frac{j+1}{n}}-S_{\frac{j}{n}}\right|>\frac{\log n}{\sqrt n}}\frac{1}{2} (1-\alpha) \left|S_{\frac{j+1}{n}}-S_{\frac{j}{n}}\right|^2.
\end{align*}

Fix $j$. 
Observe that the random vector 
$Z:=\sqrt n\left(S_{\frac{j+1}{n}}-S_{\frac{j}{n}}-\frac{b}{n}\right)$
has a normal distribution with mean zero and covariance matrix $I_d$. 
Thus,
\begin{align*}\label{4.6}
&\mathbb E_{\mathbb P}\left[e^{n K^n_j}\left|\right.\mathcal F_{\frac{j}{n}}\right]\\
&=
\mathbb E_{\mathbb P}\left[e^{\frac{1}{2}\left(Z+\frac{b}{\sqrt n}\right)^{\top}\nabla^2 u\left(\frac{j+1}{n},S_{\frac{j}{n}}\right)\left(Z+\frac{b}{\sqrt n}\right)} \, \Big| \, \mathcal F_{\frac{j}{n}}\right]\nonumber\\
&-\mathbb E_{\mathbb P}\left[\mathbbm{1}_{\left|Z+\frac{b}{\sqrt n}\right|> \log n}e^{\frac{1}{2}\left(Z+\frac{b}{\sqrt n}\right)^{\top} \nabla^2 u\left(\frac{j+1}{n},S_{\frac{j}{n}}\right)\left(Z+\frac{b}{\sqrt n}\right)} \, \Big| \, \mathcal F_{\frac{j}{n}}\right]\nonumber\\
&+\mathbb E_{\mathbb P}\left[\mathbbm{1}_{\left|Z+\frac{b}{\sqrt n}\right|> \log n}e^{\frac{1}{2}(1-\alpha)\left|Z+\frac{b}{\sqrt n}\right|^{2}} \, \Big| \, \mathcal F_{\frac{j}{n}}\right].\nonumber
\end{align*}
Since $Z$ is independent of $\mathcal F_{\frac{j}{n}}$ and $\nabla^2 u\leq (1-\alpha)I_d$, by direct computation there exists a constant 
$L_1>0$ such that the difference 
\begin{align*}
&\left|\mathbb E_{\mathbb P}\left[e^{\frac{1}{2}\left(Z+\frac{b}{\sqrt n}\right)^{\top} \nabla^2 u\left(\frac{j+1}{n},S_{\frac{j}{n}}\right)\left(Z+\frac{b}{\sqrt n}\right)} \, \Big| \, \mathcal{F}_{\frac{j}{n}}\right]  -\sqrt{\det\left(I_d-\nabla^2 u\left(\frac{j+1}{n},S_{\frac{j}{n}}\right)\right)}
\right|
\end{align*}
is bounded from above by $\frac{L_1}{n}$. Furthermore, using the Gaussian distribution of $Z$, by direction computation, there exists an another constant $L_2>0$ such that 
\begin{align*}
&\mathbb E_{\mathbb P}\left[\mathbbm{1}_{\left|Z+\frac{b}{\sqrt n}\right|> \log n}e^{\frac{1}{2}\left(Z+\frac{b}{\sqrt n}\right)^{\top} \nabla^2 u\left(\frac{j+1}{n},S_{\frac{j}{n}}\right)\left(Z+\frac{b}{\sqrt n}\right)} \, \Big| \, \mathcal F_{\frac{j}{n}}\right]\nonumber\\
&+\mathbb E_{\mathbb P}\left[\mathbbm{1}_{\left|Z+\frac{b}{\sqrt n}\right|> \log n}e^{\frac{1}{2}(1-\alpha)\left|Z+\frac{b}{\sqrt n}\right|^{2}} \, \Big| \, \mathcal F_{\frac{j}{n}}\right]\leq \frac{L_2}{\log n}.
\end{align*}
This together with (\ref{4.5}) 
and the relation 
$e^{u_t}=\sqrt{\det\left(I_d-\nabla^2 u\right)}$ gives (\ref{4.4}) and
completes the proof. 
\end{proof}

\section{Proof of Theorem \ref{thm.1}}\label{sec:4}
We start with the following lemma.
\begin{lem}\label{lem4.1}
Consider the measurable (Borel) space
$\hat\Omega:=\mathbb R^d$, $\hat{\mathcal F}:=\mathcal B(\mathbb R^d)$, i.e. the Borel $\sigma$-algebra. 
Let $Y:\hat\Omega\rightarrow\mathbb R^d$ be the corresponding canonical vector. 
For any $\lambda \in\mathbb R^d$ and $\Sigma\in\mathcal S_d$, denote by
$\mathbb P_{\lambda,\Sigma}$ the Gaussian probability measure on 
$(\hat\Omega,\hat{\mathcal F})$ with mean $\lambda$ and 
covariance matrix $\Sigma$. Let $\mathbb E_{\lambda,\Sigma}$ be the 
corresponding expectation. Then, for any measurable function 
$\phi:\mathbb R^d\rightarrow\mathbb R$ and $\lambda\in\mathbb R^d$ we have 
$$
\log\left(\inf_{\gamma\in\mathbb R^d}\mathbb E_{\lambda,I_d}\left[e^{\phi(Y)-\langle \gamma ,Y\rangle }\right]\right)
\geq 
\sup_{\Sigma\in\mathcal S_d}\left(
\mathbb E_{0,\Sigma}\left[\phi(Y)\right]-G(\Sigma)\right)-\frac{1}{2}|\lambda|^2.
$$
\end{lem}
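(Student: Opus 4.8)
The plan is to derive the inequality from a one-step change of measure followed by Jensen's inequality, with the relative entropy between the two Gaussian measures computed explicitly.

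First I would dispose of the degenerate case: if $\Sigma\in\mathcal S_d$ is singular then $\det(\Sigma)=0$, so $G(\Sigma)=+\infty$ and the corresponding summand on the right-hand side is $-\infty$; hence it suffices to prove the bound with the supremum taken only over positive definite $\Sigma$, for which $\mathbb P_{0,\Sigma}$ and $\mathbb P_{\lambda,I_d}$ are mutually equivalent. Fix such a $\Sigma$ and an arbitrary $\gamma\in\mathbb R^d$, and set $Z:=d\mathbb P_{0,\Sigma}/d\mathbb P_{\lambda,I_d}$. Rewriting the left-hand expectation under $\mathbb P_{0,\Sigma}$ one gets $\mathbb E_{\lambda,I_d}[e^{\phi(Y)-\langle\gamma,Y\rangle}]=\mathbb E_{0,\Sigma}[e^{\phi(Y)-\langle\gamma,Y\rangle-\log Z}]$, and applying $\log\mathbb E[e^X]\ge\mathbb E[X]$ (Jensen, concavity of $\log$) with $X=\phi(Y)-\langle\gamma,Y\rangle-\log Z$ yields
\[
\log\mathbb E_{\lambda,I_d}\!\left[e^{\phi(Y)-\langle\gamma,Y\rangle}\right]\;\ge\; \mathbb E_{0,\Sigma}[\phi(Y)]-\langle\gamma,\mathbb E_{0,\Sigma}[Y]\rangle-\mathbb E_{0,\Sigma}[\log Z].
\]
The crucial observation is that under $\mathbb P_{0,\Sigma}$ the canonical vector $Y$ is centered, so $\langle\gamma,\mathbb E_{0,\Sigma}[Y]\rangle=0$ for every $\gamma$; taking the infimum over $\gamma$ on the left and using monotonicity of the logarithm then gives $\log\bigl(\inf_{\gamma}\mathbb E_{\lambda,I_d}[e^{\phi(Y)-\langle\gamma,Y\rangle}]\bigr)\ge \mathbb E_{0,\Sigma}[\phi(Y)]-\mathbb E_{0,\Sigma}[\log Z]$.

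Next I would compute $\mathbb E_{0,\Sigma}[\log Z]$ from the explicit Gaussian densities: $\log Z(y)=-\tfrac12\log\det(\Sigma)-\tfrac12\,y^{\top}\Sigma^{-1}y+\tfrac12\,|y-\lambda|^2$, and integrating against $\mathbb P_{0,\Sigma}$, using $\mathbb E_{0,\Sigma}[Y^{\top}\Sigma^{-1}Y]=\Tr(\Sigma^{-1}\Sigma)=d$, $\mathbb E_{0,\Sigma}[|Y|^2]=\Tr(\Sigma)$ and $\mathbb E_{0,\Sigma}[Y]=0$, gives $\mathbb E_{0,\Sigma}[\log Z]=G(\Sigma)+\tfrac12|\lambda|^2$ (this is exactly the relative entropy of $\mathbb P_{0,\Sigma}$ with respect to $\mathbb P_{\lambda,I_d}$). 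Substituting into the previous display and taking the supremum over positive definite $\Sigma$ finishes the argument.

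Everything here is routine; the only points needing a little care are the reduction to non-degenerate $\Sigma$, the sign bookkeeping in $\log Z$ that produces the identity $\mathbb E_{0,\Sigma}[\log Z]=G(\Sigma)+\tfrac12|\lambda|^2$, and — if one wants fully general measurable $\phi$ rather than the bounded $\phi$ arising in the applications — reading the displays in $[-\infty,+\infty]$: the bound is vacuous when the right-hand side is $-\infty$, and otherwise one truncates $\phi$ from below and passes to the limit by monotone convergence.
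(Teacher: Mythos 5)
Your argument is correct and is essentially the paper's proof: the paper invokes only the ``simple'' direction of the one-period exponential-hedging duality from F\"ollmer--Schied, which is exactly the change-of-measure plus Jensen inequality you carry out by hand, and the relative-entropy computation $\mathbb E_{0,\Sigma}[\log Z]=G(\Sigma)+\tfrac12|\lambda|^2$ is identical in both. Your explicit treatment of singular $\Sigma$ and of unbounded $\phi$ is a harmless refinement of what the paper leaves implicit.
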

\begin{proof}
We will apply the classical 
duality for exponential hedging in one period model (see Chapter 3 in \cite{F}). In fact
we only need the ``simple" inequality of the duality. The duality is given by 
\begin{equation}\label{4.200}
\log\left(\inf_{\gamma\in\mathbb R^d}\mathbb E_{\lambda,I_d}\left[e^{\phi(Y)-\langle \gamma ,Y\rangle }\right]\right)= \sup_{\mathbb Q\in\mathcal M}
\mathbb E_{\mathbb Q}\left[\phi(Y)-\log\left(\frac{d\mathbb Q}{d{\mathbb P}_{\lambda,I_d}}\right)\right]
\end{equation}
where $\mathcal M$ is the set of all probability measures 
which satisfy $\mathbb E_{\mathbb Q}[Y]=0$
and have a finite relative entropy with respect to $\mathbb P_{\lambda,I_d}$.

Clearly, for any $\Sigma\in\mathcal S_d$ we have $\mathbb E_{0,\Sigma}[Y]=0.$
Furthermore, 
\begin{align*} 
&\mathbb E_{0,\Sigma}\left[\log\left(\frac{d\mathbb P_{0,\Sigma}}{d{\mathbb P}_{\lambda,I_d}}\right)\right]\\
&=\mathbb E_{0,\Sigma}\left[\log\left(\frac{1}{\sqrt{\det(\Sigma)}}\frac{\exp\left(-\frac{1}{2}Y^{\top}\Sigma^{-1}Y\right)}
{\exp\left(-\frac{1}{2}(Y-\lambda)^{\top}(Y-\lambda)\right)}\right)\right]\\
&=\frac{1}{2}\left(|\lambda|^2-\log\left(\det(\Sigma)\right)\right)+\frac{1}{2}\mathbb E_{0,\Sigma}\left[Y^{\top}(I_d-\Sigma^{-1})Y\right]\\
&=\frac{1}{2}|\lambda|^2+G(\Sigma).
\end{align*}
This together with (\ref{4.200}) completes the proof. 
\end{proof}

Now, we are ready to prove Theorem \ref{thm.1}.
\begin{proof}
In view of Theorem \ref{thm.2}, in order to prove Theorem \ref{thm.1} it is sufficient to establish the following lower bound.  
\begin{equation}\label{4.1}
 \liminf_{n\rightarrow\infty} c_n\geq
 \sup_{\Sigma\in\mathcal V}\mathbb E_{\mathbb P}\left[f\left(S_0+\int_{0}^1\sqrt{\Sigma_u}dW_u\right)- \int_{0}^1 G\left(\Sigma_t\right)dt \right].
\end{equation}

For any $K>0$ and $n\in\mathbb N$, let $\mathcal V^n_K\subset\mathcal V$ be the set of all volatility processes
of the form
$$
\Sigma_t=\sum_{k=0}^{n-1} \mathbbm{1}_{t\in \left[\right.\frac{k}{n}, \frac{k+1}{n}\left)\right.}\phi_k\left(W_0,W_{\frac{1}{n}},..., W_{\frac{k}{n}}\right)
$$
where $\phi_k:\mathbb R^{k+1}\rightarrow \mathcal S_d$, $k=0,...,n-1$ are measurable functions
which satisfy $\phi_k\geq \frac{1}{K}I_d$. 
First, we argue that
\begin{align}\label{1}
&\sup_{\Sigma\in\mathcal V}\mathbb E_{\mathbb P}\left[f\left(S_0+\int_{0}^1\sqrt{\Sigma_u}dW_u\right)- \int_{0}^1 G\left(\Sigma_u\right)du \right]\nonumber\\
&=\lim_{K\rightarrow \infty}\liminf_{n\rightarrow\infty}
\sup_{\Sigma\in\mathcal V^n_K}\mathbb E_{\mathbb P}\left[f\left(S_0+\int_{0}^1\sqrt{\Sigma_u}dW_u\right)- \int_{0}^1 G\left(\Sigma_u\right)du \right].
\end{align}

To this end, observe that for any $\Sigma \in \mathcal{S}_d$, $\log\left(\det (\Sigma)\right) \leq \Tr(\Sigma) -d$. Therefore for any $\Sigma \in \mathcal{V}$ by the dominated convergence theorem, 
$$
\lim_{m\rightarrow\infty}\mathbb E_{\mathbb P}\left[\int_{0}^1 G\left(\Sigma_t+\frac{I_d}{m}\right)dt \right]=\mathbb E_{\mathbb P}\left[\int_{0}^1 G\left(\Sigma_t\right)dt \right].
$$
Moreover, from the fact that $f$ is uniformly bounded and lower semi-continuous we obtain 
$$
\mathbb E_{\mathbb P}\left[f\left(S_0+\int_{0}^1\sqrt{\Sigma_u}dW_u\right)\right]\leq \liminf_{m\rightarrow\infty}
\mathbb E_{\mathbb P}\left[f\left(S_0+\int_{0}^1\sqrt{\Sigma_u+\frac{I}{m}}dW_u\right)\right].
$$
Hence, we will get the same supremum on the
right-hand side of (\ref{2.3})
 if instead of letting $\Sigma$ vary over all of $\mathcal V$ there, we confine it to be uniformly bounded away from zero.

Thus, let us assume that
$\Sigma \geq \frac{1}{K} I_d$ for some $K>0$.
From standard density arguments there exists a sequence 
$\Sigma^n\in \mathcal V^n_K$ such that $\Sigma^n \geq \frac{1}{K} I_d$ for all $n$ and $\Sigma^n\rightarrow \Sigma$
in $L^1(\mathbb P\otimes dt)$; see e.g. \cite[Lemma 7.3]{DoSo13}.
Since $M \mapsto G(M)$ is Lipschitz continuous over the set of positive semi-definite matrices $M$ such that $M \geq \frac{1}{K} I_d$, it holds that
$$\mathbb E_{\mathbb P}\left[\int_{0}^1 G\left(\Sigma_t\right)dt \right]=
\lim_{n\rightarrow\infty}\mathbb E_{\mathbb P}\left[\int_{0}^1 G\left(\Sigma^n_t\right)dt \right].$$
Again, from the fact that $f$ is lower semi-continuous with linear growth
$$\mathbb E_{\mathbb P}\left[f\left(S_0+\int_{0}^1\sqrt{\Sigma_u}dW_u\right)\right]\leq \liminf_{n\rightarrow\infty}
\mathbb E_{\mathbb P}\left[f\left(S_0+\int_{0}^1\sqrt{\Sigma^n_u}dW_u\right)\right].$$
We conclude that (\ref{1}) holds true.

Finally, in view of (\ref{1}), in order to get \eqref{4.1}, it is sufficient to show that for any $n\in \mathbb N$
and $K>0$  
\begin{equation}\label{4.400}
c_n\geq \sup_{\Sigma\in\mathcal V^n_K}\mathbb E_{\mathbb P}\left[f\left(S_0+\int_{0}^1\sqrt{\Sigma_u}dW_u\right)- \int_{0}^1 G\left(\Sigma_u\right)du \right]-
\frac{1}{2n}|b|^2.
\end{equation}
Fix $n \in \mathbb N$. From a standard dynamic programming argument, we obtain that  $c_n=V_0(S_0)$,
where the functions $V_0,...,V_n$ are given by the backward recursions
\begin{align*}
V_n(x)=&f(x), \\
V_k(x)=&\frac{1}{n}\inf_{\gamma\in\mathbb R^d} \log\mathbb E_{\mathbb P}
\left[e^{nV_{k+1}\left(x+\left(S_{\frac{k+1}{n}}-S_{\frac{k}{n}}\right)\right)-
\left\langle \gamma ,S_{\frac{k+1}{n}}-S_{\frac{k}{n}}\right\rangle }\right], \ \ k \leq n-1.
\end{align*}
Choose $k<n$ and $x\in\mathbb R^d$. Observe that the random vector
$\sqrt n\left(S_{\frac{k+1}{n}}-S_{\frac{k}{n}}\right)$
has a normal distribution with mean $\frac{b}{\sqrt n}$ and covariance matrix $I_d$.
Thus, by applying Lemma \ref{lem4.1} for the function $\phi(Y):=nV_{k+1}\left(x+\frac{Y}{\sqrt n}\right)$
and $\lambda:=\frac{b}{\sqrt n}$ we obtain that for any $k<n$ and $x\in\mathbb R^d$ 
$$V_k(x)\geq \sup_{\Sigma\in\mathcal S_d}
\left(\mathbb E_{\mathbb P}\left[V_{k+1}\left(x+\sqrt{\Sigma}\left(W_{\frac{k+1}{n}}-W_{\frac{k}{n}}\right)\right)\right]-\frac{1}{n}G\left(\Sigma\right)\right)-\frac{1}{2n^2}|b|^2.$$ 
Hence, from dynamical programming we conclude that 
$V_n(S_0)$ is bigger or equal to the right hand side of (\ref{4.400}).
\end{proof}

\end{document}